\def\title#1{{\Large\bf  \begin{center} #1 \vspace{0pt} \end{center}  } \smallskip}
\def\authors#1{{\bf \begin{center} #1 \vspace{0pt} \end{center} } \smallskip}
\def\institution#1{{\sl \begin{center} #1 \vspace{0pt} \end{center} } }
\def\keywords#1{\bigskip \par\noindent{\bf Keywords: }#1\par}
\def\AMS#1{\par\noindent{\bf AMS subject classifications: }#1\par}
\newtheorem{theorem}{Theorem}
\newtheorem{prop}[theorem]{Proposition}
\newtheorem{appxlem}{Lemma}[section]
\newtheorem{appxprop}[appxlem]{Proposition}
\newtheorem*{remark*}{Remark}
\newcommand{\R}{{\mathbb R}}
\newcommand{\E}{{\, \mathbb E}}
\renewcommand{\P}{{\mathbb P}}
\newcommand{\N}{{\mathbb N}}
\DeclareMathOperator{\Var}{Var}
\DeclareMathOperator{\trace}{trace}
\newcommand{\vast}{\bBigg@{3}}
\newcommand{\Vast}{\bBigg@{5}}
\renewcommand{\textcolor}[2]{#2}
\renewcommand{\sout}[1]{}
\renewcommand{\xout}[1]{}
\renewcommand{\footnote}[1]{}
\begin{document}
\sloppy
\normalem

\title{Shrinkage estimators for prediction out-of-sample: \\
Conditional performance}

\authors{Nina Huber and Hannes Leeb}

\institution{Department of Statistics, University of Vienna}

\bigskip
\noindent {\Large\bf Abstract}
\medskip
 
\sout{In a linear model, we find that the
James-Stein estimator,} \textcolor{red}{We find that, in a linear model, the
James--Stein estimator}, which dominates the maximum-likelihood estimator
in terms of its in-sample prediction error, can perform poorly compared to
the maximum-likelihood estimator in out-of-sample prediction. We give a detailed analysis of this phenomenon and
discuss its implications\sout{ for statistical practice}. When evaluating the predictive performance of estimators, we
treat the regressor matrix in the training data as fixed, i.e., we condition on
the design variables.  Our findings contrast those obtained by \textcolor{red}{Baranchik (1973, Ann.\ Stat.\ 1:312--321)} and, more recently, by \textcolor{red}{Dicker (2012, arXiv:1102.2952)} in an unconditional performance evaluation.

\keywords{\textcolor{red}{James--Stein estimator, random matrix theory, random design}}
\AMS{62M20, 62J07}

\section{Introduction}
\label{s1}

The problem of in-sample prediction, i.e., estimating the regression
function at the observed design points, is arguably among the most
extensively studied topics in regression analysis. 
But methods designed to perform well for\sout{ prediction in-sample} \textcolor{red}{in-sample prediction} need
not perform well for\sout{ prediction out-of-sample} \textcolor{red}{out-of-sample prediction,} i.e., for estimating the
regression function at a new point. We study the out-of-sample
predictive performance of the\sout{ James-Stein} \textcolor{red}{James--Stein} estimator, which dominates the maximum-likelihood
estimator in the in-sample scenario; see \cite{stein56} or the
comprehensive monograph \cite{judgebock78}. We focus on the\sout{ James-Stein} \textcolor{red}{James--Stein}
estimator because of its conceptual importance (and because it is
amenable to a detailed analytical analysis).\sout{ Indeed, } \textcolor{red}{The}\sout{ James-Stein} \textcolor{red}{James--Stein}
estimator is the first method that was found to dominate maximum-likelihood through shrinkage, a discovery that helped to spark the
development of many of the powerful estimation methods available today
that rely on some sort of shrinkage through, e.g., regularization,
model selection, or model averaging; see \cite{leebpoetscher08} for a
survey. In this paper, we find that the\sout{ James-Stein} \textcolor{red}{James--Stein} estimator can
perform poorly compared to the maximum-likelihood estimator in
out-of-sample prediction, and we analyze and explain this phenomenon.

Consider the Gaussian linear regression model
\begin{equation}\label{eq:model}
Y \quad=\quad X \beta + u,
\end{equation}
where $X$ is a fixed $n\times p$ matrix of rank $p$,
$\beta \in \R^p$, $n \geq p \geq 3$, and $u\sim N(0,\sigma^2 I_n)$.\sout{ For the sake of simplicity, we assume that $\sigma^2$ is known,
such that we can take $\sigma^2 = 1$ without loss of generality
in the following.}\footnote{\sout{
	Consideration of the known-variance case is sufficient to
	showcase the main points of the paper. 
	A detailed analysis of the unknown-variance case is 
	feasible, at the expense of additional lengthy computations,
	and hence will not be included here. 
}} \textcolor{red}{For simplicity, we focus on the known variance case and we assume that $\sigma^2 = 1$}. Given an estimator $\tilde{\beta}$ for $\beta$,
the corresponding in-sample prediction error, i.e.,
the mean squared error when estimating $X\beta$ by $X\tilde{\beta}$,
will be denoted by $\rho_1(\tilde{\beta},\beta,X)$ and is defined by
\begin{equation}\label{eq:ispe}
\rho_1(\tilde{\beta},\beta,X) \quad=\quad \frac{1}{n}
\E\Bigg[ (X\tilde{\beta} - X \beta)'(X\tilde{\beta} - X \beta)\Bigg]
\quad=\quad
\E\Bigg[ (\tilde{\beta}-\beta)' \frac{X'X}{n} (\tilde{\beta}-\beta)\Bigg].
\end{equation}
\sout{For prediction out-of-sample} \textcolor{red}{For out-of-sample prediction}, consider a new set of explanatory variables,
i.e., a $p$-vector $x_0$, that is independent of $Y$,
and hence also independent of  $\tilde{\beta}$, and that satisfies
$\E[x_0] = 0$ and $\E[ x_0 x_0'] = \Sigma$, where\sout{ $\Sigma>0$} \textcolor{red}{$\Sigma$ is positive definite and} is regarded
as a nuisance parameter. \textcolor{red}{(In case of fixed $x_0$, e.g., $x_0 = x_0^\ast \in \R^p$, we end up with the one-dimensional
	estimation target $x_0^\ast\mbox{}'\beta$, and it is\sout{ well-known} well known that
	the maximum-likelihood estimator for $x_0^\ast\mbox{}'\beta$ is 
	unique admissible minimax; see \cite{lehmanncasella98}.)} The out-of-sample prediction error is
the mean squared error when $x_0'\tilde{\beta}$  is used to 
predict $x_0' \beta$,
where now the mean is taken with respect to both $Y$ and $x_0$.\footnote{\sout{
	For a fixed value of $x_0$,
	e.g., $x_0= x_0^\ast \in \R^p$,  we end up with the one-dimensional
	estimation target $x_0^\ast\mbox{}'\beta$, and it is well-known that
	the maximum-likelihood estimator for $x_0^\ast\mbox{}'\beta$ is 
	unique admissible minimax;} see \cite{lehmanncasella98}.} This error will be denoted by $\rho_2(\tilde{\beta},\beta,X)$ and is defined by
\begin{equation}\label{eq:oospe}
\rho_2(\tilde{\beta},\beta,X) \quad=\quad 
\E\Bigg[ (x_0'\tilde{\beta} - x_0' \beta)^2 \Bigg]
\quad=\quad
\E\Bigg[ (\tilde{\beta}-\beta)' \Sigma (\tilde{\beta}-\beta)\Bigg].
\end{equation}
(Of course, the out-of-sample prediction error
$\rho_2(\tilde{\beta},\beta,X)$ also depends on the  matrix $\Sigma$, although this
dependence is not explicitly shown in our notation.) \textcolor{red}{We note that the existing results of \cite{baranchik73} and \cite{dicker12} consider prediction errors by assuming that $X$ is random and by taking expectations as in \eqref{eq:ispe} and \eqref{eq:oospe} also with respect to $X$. We, on the other hand, compute prediction errors by treating $X$ as fixed, i.e., we condition on the design.} The expressions on the far right-hand sides of \eqref{eq:ispe} and \eqref{eq:oospe}
differ in the matrices $X'X/n$ and $\Sigma$.
If $X'X/n$ is very close to $\Sigma$, then the in-sample prediction error
will be close to the out-of-sample prediction error.
But if $X'X/n$ is not very close to $\Sigma$, then
the in-sample predictive performance as measured by $\rho_1(\tilde{\beta},\beta,X)$
can be markedly different  from the out-of-sample predictive performance
as measured by $\rho_2(\tilde{\beta},\beta,X)$.

In this paper, we compare the maximum-likelihood estimator, the\sout{ James-Stein} \textcolor{red}{James--Stein} estimator, and related shrinkage-type estimators
by their performance as 
out-of-sample predictors. For in-sample prediction, i.e., in terms
of the risk $\rho_1(\cdot,\cdot,X)$, it is\sout{ well-known} \textcolor{red}{well known}
that the\sout{ James-Stein} \textcolor{red}{James--Stein} estimator dominates the maximum-likelihood estimator.
But for out-of-sample prediction, i.e., in terms of 
$\rho_2(\cdot,\cdot,X)$, we find that the\sout{ James-Stein} \textcolor{red}{James--Stein} estimator 
can perform quite poorly compared to the maximum-likelihood estimator;
see Figure~1, relation \eqref{eq:bad},
and also relation \eqref{t3.1} in Theorem~\ref{t3}.
(This finding contrasts a result of \cite{baranchik73} as 
discussed at the end of Section~\ref{s2} and after Theorem~\ref{t2} in
Section~\ref{s3}.)
But we also find that such disappointing worst-case performance
of the\sout{ James-Stein} \textcolor{red}{James--Stein} estimator is atypical, in a certain sense;
see relation~\eqref{t3.2} in Theorem~\ref{t3}.

For the case where
$\Sigma$ in \eqref{eq:oospe} is known, estimators that dominate the
maximum-likelihood estimator in terms of the risk \eqref{eq:oospe} are\sout{ well-known}
\textcolor{red}{well known}, and we refer to \cite{strawderman03} and the references given
therein. The case where $\Sigma$ is unknown but estimable is studied
by \cite{baranchik70}, \cite{bergerbock76}, \cite{bergeral77}, and
\cite{copas83}. For the challenging case where $\Sigma$ is unknown and
not estimable (in the sense that no further structural restrictions are
imposed on $\Sigma$ and that $p/n$ is large, a scenario that we study
in Section~\ref{s3}), we are not aware of further relevant existing results.

Explicit finite-sample formulae for the 
out-of-sample prediction errors of the estimators
in question are derived in Section 2. In Section 3, we present
approximations to the finite-sample quantities of interest;
our approximations become accurate as $n\to\infty$, 
uniformly in the underlying parameters.
Conclusions are drawn in Section~\ref{s4}, and the more technical
derivations are collected in the appendices.

\section{Explicit finite-sample results}
\label{s2}

Recall that
the maximum-likelihood estimator of $\beta$ is 
$\hat{\beta}_{ML} = (X'X)^{-1} X'Y\sim N(\beta, (X'X)^{-1})$.
As pointed out by \cite{stein56},\sout{ James-Stein-type} \textcolor{red}{James--Stein-type} shrinkage estimators
here correspond to estimators $\hat{\beta}(c)$ of $\beta$\sout{of} \textcolor{red}{with}\sout{ the form
$\hat{\beta}(c) = (1 - c p / (\hat{\beta}_{ML}' X' X \hat{\beta}_{ML}))\hat{\beta}_{ML}$} \textcolor{red}{$\hat{\beta}(c) = [1 - c p / (\hat{\beta}_{ML}' X' X \hat{\beta}_{ML})]
\hat{\beta}_{ML}$}, where $c \geq  0$ is a tuning-parameter
(cf. Appendix A). 
In particular,
the traditional\sout{ James-Stein} \textcolor{red}{James--Stein} estimator corresponds
to the estimator $\hat{\beta}(c)$ with $c=(p-2)/p$ 
and\sout{ we will denote this estimator} \textcolor{red}{will be denoted by} $\hat{\beta}_{JS}$\sout{ , i.e., $\hat{\beta}_{JS} = \hat{\beta}((p-2)/p)$};
in the following, $\hat{\beta}_{JS}$ will also be called
the\sout{ James-Stein} \textcolor{red}{James--Stein} estimator (of $\beta$). 

\begin{prop}
\label{propa1}
The in-sample prediction error and 
the out-of-sample prediction error of the\sout{ James-Stein-type} \textcolor{red}{James--Stein-type} shrinkage estimator
$\hat{\beta}{(c)}$ satisfy 
\begin{equation}\label{eq:ispe.js}
\rho_1(\hat{\beta}(c),\beta,X)\quad=\quad
\rho_1(\hat{\beta}_{ML},\beta,X) - \frac{1}{n} \left[ 2 cp  (p-2) - c^2 p^2
\right] \E\left[ \frac{1}{\hat{\beta}_{ML}' X' X \hat{\beta}_{ML}}\right]
\end{equation}
with $\rho_1(\hat{\beta}_{ML}, \beta, X)=p/n$, and
\begin{equation} \label{eq:oospe.js}
\begin{split}
\rho_2(\hat{\beta}(c),\beta,X)\quad=\quad
\rho_2(\hat{\beta}_{ML},\beta,X) & - 2 c p   \trace(\Sigma(X'X)^{-1})
\E\left[
\frac{1}{\hat{\beta}_{ML}' X' X \hat{\beta}_{ML}}
\right]
\\
& + ( c^2 p^2+ 4 c p
) \E\left[
\frac{\hat{\beta}_{ML}' \Sigma \hat{\beta}_{ML}
}{
(\hat{\beta}_{ML}' X' X \hat{\beta}_{ML})^2 }
\right]
\end{split}
\end{equation}
with $\rho_2(\hat{\beta}_{ML}, \beta, X)=\trace(\Sigma (X'X)^{-1})$,
respectively.
\end{prop}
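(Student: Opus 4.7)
The plan is to write $\hat{\beta}(c)-\beta = (\hat{\beta}_{ML}-\beta) - (cp/W)\hat{\beta}_{ML}$, where $W = \hat{\beta}_{ML}'X'X\hat{\beta}_{ML}$, and then expand the quadratic form for a generic positive semidefinite matrix $A$:
\[
(\hat{\beta}(c)-\beta)'A(\hat{\beta}(c)-\beta) \;=\;
(\hat{\beta}_{ML}-\beta)'A(\hat{\beta}_{ML}-\beta)
\;-\; \frac{2cp}{W}(\hat{\beta}_{ML}-\beta)'A\hat{\beta}_{ML}
\;+\; \frac{c^2p^2}{W^2}\hat{\beta}_{ML}'A\hat{\beta}_{ML}.
\]
Specializing to $A = X'X/n$ gives \eqref{eq:ispe.js}, and to $A = \Sigma$ gives \eqref{eq:oospe.js}, once the expectations of the cross and quadratic terms are identified. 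The terms $\rho_1(\hat\beta_{ML},\beta,X) = p/n$ and $\rho_2(\hat\beta_{ML},\beta,X) = \trace(\Sigma(X'X)^{-1})$ are immediate from $\Var(\hat{\beta}_{ML}) = (X'X)^{-1}$.

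The main technical step is handling the cross term via Stein's lemma, which is cleanest after whitening. I would factor $X'X = LL'$ and set $\theta = L'\hat{\beta}_{ML} \sim N(L'\beta, I_p)$, so that $W = \|\theta\|^2$. For the in-sample case, $(\hat{\beta}_{ML}-\beta)'X'X\,\hat{\beta}_{ML} = (\theta-L'\beta)'\theta$, and with $h(\theta) = \theta/\|\theta\|^2$ one computes $\mathrm{div}(h) = (p-2)/\|\theta\|^2$, hence Stein's identity yields
\[
\E\!\left[\frac{(\hat{\beta}_{ML}-\beta)'X'X\,\hat{\beta}_{ML}}{W}\right] \;=\; (p-2)\,\E[1/W],
\]
and the quadratic term contributes $\E[\hat{\beta}_{ML}'X'X\hat{\beta}_{ML}/W^2] = \E[1/W]$; collecting terms produces \eqref{eq:ispe.js}.

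For the out-of-sample case, set $M = L^{-1}\Sigma(L')^{-1}$, so that $(\hat{\beta}_{ML}-\beta)'\Sigma\,\hat{\beta}_{ML} = (\theta-L'\beta)'M\theta$ and $\hat{\beta}_{ML}'\Sigma\hat{\beta}_{ML} = \theta'M\theta$. Applying Stein's identity to $h(\theta) = M\theta/\|\theta\|^2$ gives
\[
\mathrm{div}(h)(\theta) \;=\; \frac{\trace(M)}{\|\theta\|^2} - \frac{2\,\theta'M\theta}{\|\theta\|^4},
\]
and using $\trace(M) = \trace(\Sigma(X'X)^{-1})$ together with $\theta'M\theta/\|\theta\|^4 = \hat{\beta}_{ML}'\Sigma\hat{\beta}_{ML}/W^2$, the cross and quadratic terms combine to yield the coefficients $-2cp\,\trace(\Sigma(X'X)^{-1})$ on $\E[1/W]$ and $(c^2p^2 + 4cp)$ on $\E[\hat{\beta}_{ML}'\Sigma\hat{\beta}_{ML}/W^2]$, matching \eqref{eq:oospe.js}.

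The main obstacle is not so much any individual calculation but making sure the Stein-type manipulations are legitimate; the required integrability (finiteness of $\E[1/W]$ and $\E[\hat{\beta}_{ML}'\Sigma\hat{\beta}_{ML}/W^2]$) follows from $p \geq 3$ since $W$ is noncentral $\chi^2_p$ in the $\theta$-coordinates, which I would verify briefly before invoking the identity.
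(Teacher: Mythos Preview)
Your proposal is correct and follows essentially the same approach as the paper: whiten to standard normal coordinates, expand the quadratic form, and evaluate the cross term via Stein's lemma with the matrix $T=(X'X)^{-1/2}\Sigma(X'X)^{-1/2}$ (your $M$). The only cosmetic differences are that the paper cites \cite{jamesstein61} for the in-sample formula rather than deriving it, and it applies Stein's lemma coordinate-wise (conditioning on the other $Z_k$'s) instead of in the multivariate divergence form you use; both yield the same identity $\E[Z'T(Z-\zeta)/Z'Z]=\trace(T)\E[1/Z'Z]-2\E[Z'TZ/(Z'Z)^2]$.
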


The well-known formula on the
right-hand side of \eqref{eq:ispe.js} shows that the in-sample prediction error of 
$\hat{\beta}(c)$
equals the in-sample prediction error of 
$\hat{\beta}_{ML}$
minus the product of a positive expected value and a polynomial in $c$ 
and $p$. In particular, $\rho_1(\hat{\beta}(c),\beta,X)$ is
smaller than $\rho_1(\hat{\beta}_{ML},\beta,X)$ if $c$ satisfies
$0 < c  < 2(p-2)/p$ and is minimized for $c = (p-2)/p$,
which is the tuning-parameter used by $\hat{\beta}_{JS}$.
Moreover, it is easy to see 
that $\rho_1(\hat{\beta}(c),\beta,X)$ depends
on $\beta$ and $X$ only through $\beta' (X'X/n) \beta$. Unlike the
formula of $\rho_1(\hat{\beta}{(c)},\beta,X)$  in \eqref{eq:ispe.js}, display \eqref{eq:oospe.js} shows that
$\rho_2(\hat{\beta}{(c)},\beta,X)$ is obtained from 
$\rho_2(\hat{\beta}_{ML},\beta,X)$
by subtracting a positive term and then adding another  positive term, 
i.e, the second and the third term on the right-hand side of 
\eqref{eq:oospe.js},
that depend on $c$, on $p$, and on the unknown parameters in 
a \textcolor{red}{more} complicated fashion.
By further inspection, we find
that \eqref{eq:oospe.js} depends on $\beta$ and $X$ through
$\beta' (X'X/n)\beta$ and through $\beta'\Sigma\beta$
(which can be viewed as a kind of signal-to-noise ratio); for details,
see Proposition~\ref{propa2}.

\begin{center}
\begin{tabular}{cc}
\includegraphics[width=0.69 \textwidth]{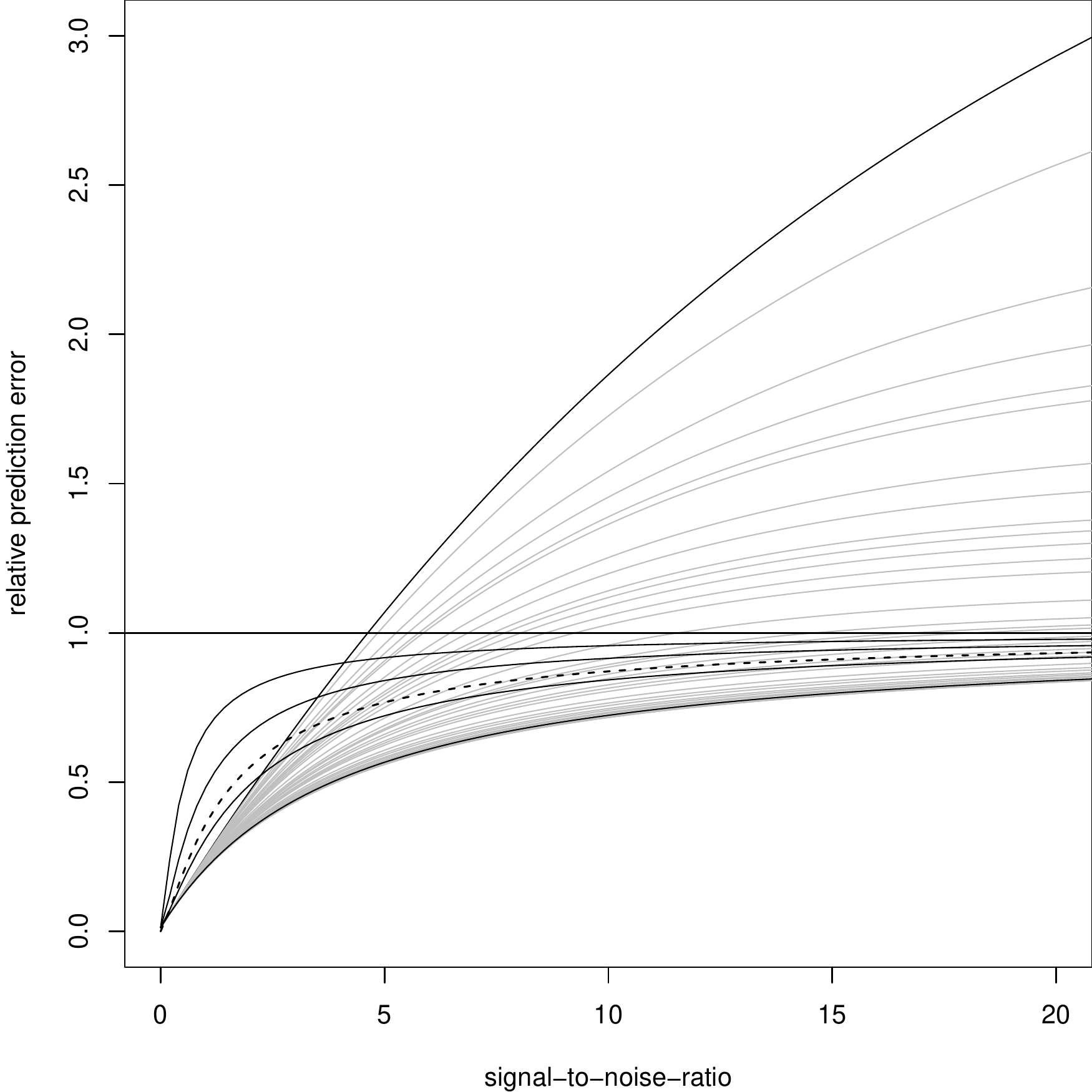}
& \begin{minipage}[b]{4.8 cm} \small{Figure 1: The solid curves in black and gray show\sout{ the} finite sample 
 	relative
     prediction errors\sout{ , i.e.,
          $\rho_2(\hat{\beta}_{JS},\beta,X)/\rho_2(\hat{\beta}_{ML},\beta,X)$, } as a function of\sout{ the signal-to-noise ratio} $\beta' \Sigma \beta$\sout{ for $\beta$ parallel to various eigenvectors
     of $X'X/n$}, and are explained in the next paragraph.
     The dashed curve shows the approximation
     to \textcolor{red}{the relative prediction error}\sout{ $\rho_2(\hat{\beta}_{JS},\beta,X)/\rho_2(\hat{\beta}_{ML},\beta,X)$,
     i.e., $r(\beta'\Sigma\beta, 1, p/n) / r(\beta'\Sigma \beta, 0, p/n)$,}
     that is obtained from Theorem~\ref{t1} in Section~\ref{s3}.
     The constant solid line at $1$ is for reference.} 
\end{minipage}
\end{tabular}
\end{center}

Figure~1 exemplifies the relative out-of-sample prediction error
of the\sout{ James-Stein} \textcolor{red}{James--Stein} estimator and of the maximum-likelihood estimator, i.e., 
$\rho_2(\hat{\beta}_{JS},\beta,X) / \rho_2(\hat{\beta}_{ML},\beta,X)$,
as a function of $\beta' \Sigma\beta$ 
for various configurations in parameter space.
For the figure, we selected a scenario 
where $X'X/n$ is not very close to $\Sigma$, so that  the
in-sample and the out-of-sample predictive performance of
estimators differ from each other (as noted in the discussion 
after \eqref{eq:oospe}). In particular, we took $n=200$,
$p=160$, $\Sigma= I_p$,  and $X$ was obtained by sampling i.i.d.\ standard normals.
The solid curves show 
$\rho_2(\hat{\beta}_{JS},\beta,X)/\rho_2(\hat{\beta}_{ML},\beta,X)$ 
as a function of $\beta'\Sigma\beta$, for $\beta$ parallel  to
various eigenvectors of $X'X/n$. Let $w_i$ be the eigenvector
corresponding to the eigenvalue $\nu_i$ of $X'X/n$, and
assume that $\nu_1 \leq \dots \leq \nu_{160}$.
Four solid black curves stay below $1$ and appear to be ordered;
starting from the top,
these correspond to $\beta$ parallel to $w_{160}$, $w_{120}$, $w_{80}$, 
and $w_{40}$.
The fifth solid black curve, that exceeds 1, corresponds to
$\beta$ parallel to $w_{1}$, i.e., the eigenvector of the smallest
eigenvalue. This curve attains a maximum of 4.27 at 75.12 (which
is off the chart) and
then recedes back towards $1$ as $\beta'\Sigma\beta \to \infty$.
The gray curves are obtained in the same way but for eigenvectors
corresponding to the remaining  smallest 25\% of eigenvalues.
The curves in Figure 1 differ dramatically depending on  whether they
correspond to small eigenvalues like $\nu_{1}$ on the one hand,
and moderate-to-large eigenvalues like $\nu_{40}$, $\nu_{80}$, 
$\nu_{120}$, and $\nu_{160}$ on the other.
Repeating these computations with $X$ replaced by a new independent
sample, we obtained essentially the same  results. And for other choices of
$p$ and $n$, we obtained results that are 
qualitatively similar, including maxima above $1$ corresponding to
small eigenvalues. This phenomenon becomes less pronounced as $p/n$
decreases, and it disappears completely for very small values of $p/n$.
The results in Section~\ref{s3} entail that
this is no surprise.

Figure~1 shows that the\sout{ James-Stein} \textcolor{red}{James--Stein} estimator 
no longer dominates the maximum-likelihood
estimator for out-of-sample prediction, in the sense that
\begin{equation}\label{eq:bad}
\rho_2(\hat{\beta}_{JS},\beta,X)
\quad > \quad
\rho_2(\hat{\beta}_{ML},\beta,X)
\end{equation}
for some $\beta \in \R^p$, if $X$ is the design matrix used to generate the figure.
(Indeed,  the left-hand side of the preceding display
exceeds the right-hand side by a factor of $4.27$ for appropriately
chosen $\beta$,
as noted in the preceding paragraph.)
This should be compared to the findings of \cite{baranchik73}: The results in that paper suggest that
 \begin{equation}\label{eq:good}
\E\left[\rho_2(\hat{\beta}_{JS},\beta,X)\right] \quad \leq \quad
\E\left[\rho_2(\hat{\beta}_{ML},\beta,X)\right]
\end{equation}
for each $\beta \in \R^p$, with strict inequality for some $\beta$, if $X$ is random with i.i.d.\ $N(0, \Sigma)$-distributed rows, and where the expectation in \eqref{eq:good} is taken with respect to $X$ (see also \cite{dicker12}). Comparing the preceding two displays, we see that for $X$ fixed, cf. \eqref{eq:bad}, $\hat{\beta}_{JS}$ can perform poorly for some $\beta \in \R^p$. But on average with respect to $X$, as considered in \cite{baranchik73} and \cite{dicker12}, the relation in \eqref{eq:good} suggests that $\hat{\beta}_{JS}$ performs well, irrespective of $\beta$. The performance of $\hat{\beta}_{JS}$ hence depends crucially on whether we condition on the design as in \eqref{eq:bad} or average with respect to the design distribution as in \eqref{eq:good}. We give a more detailed analysis and explanation of this phenomenon
in the next section. Also note that
the phenomenon in \eqref{eq:bad} and \eqref{eq:good} is related to
the ancillarity paradox of \cite{brown90}.

\section{Asymptotic approximations}
\label{s3}

In this section, we provide approximations to  quantities like 
$\rho_2(\hat{\beta}_{JS},\beta,X)$ and
$\sup_{\beta} \rho_2(\hat{\beta}_{JS},\beta,X)$
for `typical' design matrices $X$.
Here, `typical' means `in probability' when  the explanatory variables 
in the training period,
i.e., the rows of $X$, are taken as realizations from the same 
distribution as those in the prediction period
(i.e., $x_0$).  Our approximations are uniform in the unknown parameters
and become accurate as $n\to\infty$,
where the dimension of the model considered at sample size $n$, i.e.,
$p$, is allowed to depend on sample size.\footnote{\sout{
	Our asymptotic approximations to the out-of-sample prediction error
	are most useful in the high-dimensional case
	where $p/n$ is bounded away from zero.
	In the remaining case, i.e.,  when $p/n \to 0$, 
	the out-of-sample prediction
	error typically converges to zero, 
	such that estimators should be compared in terms of relative
	errors.  
	Relative out-of-sample prediction errors will be studied elsewhere.}
} Note that quantities like $\rho_2(\hat{\beta}{(c)}, \beta, X)$ and
$\rho_2(\hat{\beta}_{ML}, \beta, X)$ now become random variables
through their dependence on $X$. We emphasize that our evaluation of performance
is always taken conditional on $X$, and that the
random design is used only to describe the behavior for `typical' 
design matrices $X$.
We also stress, with $X$ random, that the expectations in 
\eqref{eq:ispe.js} and
\eqref{eq:oospe.js} are now to be understood as conditional on $X$.

\sout{More formally, the following assumptions will be maintained throughout
this section:} For each $n$ and $p$ under consideration ($n\geq p\geq 3$), 
we assume that the 
model~\eqref{eq:model} holds; that $x_0$ and $X$ are independent of
the error $u$ in \eqref{eq:model}; and that the rows of $X$ and also $x_0$ are i.i.d.\ 
with mean zero and\sout{ covariance matrix $\Sigma>0$} \textcolor{red}{positive definite covariance matrix $\Sigma$}. In addition, we
assume that $X$ can be written as $X = V \Sigma^{1/2}$, where
$\Sigma^{1/2}$ denotes a symmetric square root of $\Sigma$ and
where $V$ is the $n\times p$ matrix obtained by taking the upper left block of 
a double \textcolor{red}{infinite} array $(V_{i,j})_{i\geq 1,j\geq 1}$ 
of i.i.d.\ random variables that
have mean zero,  variance one, and a finite fourth moment\footnote{\sout{
	This assumption is widely used in the literature on large-dimensional
	random matrices; cf. }\cite{baisilverstein10}.\sout{  
	We impose this assumption here because
	we rely on results from this literature.}
} \textcolor{red}{(cf. \cite{baisilverstein10})}.
Finally, we also assume that the (marginal) distribution of the
$V_{i,j}$'s is absolutely continuous with respect to Lebesgue
measure.\footnote{\sout{This assumption serves only to shorten some of the
proofs and can be dropped altogether. The asymptotic results in this
section continue to hold without this assumption if
$\rho_2(\hat{\beta}(c), \beta, X)$ is defined as, say, 0 on the event
where $X'X$ or, equivalently, $V'V$, is degenerate, because the probability of that event goes
to zero as $n \to \infty$ whenever $p/n$ is bounded away from 1;}
cf. Lemma~\ref{mp2}.} \textcolor{red}{This assumption could be dropped altogether, but at the expense of longer and technically more involved proofs.}
Under these assumptions, we note that\sout{ $X'X>0$} \textcolor{red}{$X'X$ is invertible} almost surely. We set
$ \rho_2(\hat{\beta}(c),\beta,X) =  0$
on the probability-zero event where $X'X$ is degenerate. Otherwise,
the random variable $\rho_2(\hat{\beta}(c),\beta,X)$
is defined by the expression on the right-hand side of
\eqref{eq:oospe.js}, where the expected values are to be understood as
conditional on $X$. These conventions also cover
$\rho_2(\hat{\beta}_{ML},\beta,X)$
and $\rho_2(\hat{\beta}_{JS},\beta,X)$ in view of
$\hat{\beta}_{ML} = \hat{\beta}(0)$ and 
$\hat{\beta}_{JS} = \hat{\beta}((p-2)/p)$. The following two results
provide simple asymptotic approximations to quantities like
$\rho_2(\hat{\beta}_{JS}, \beta, X)$ as well as $\sup_{\beta \in \R^p}
\rho_2(\hat{\beta}_{JS}, \beta, X)$.

\begin{theorem}\label{t1}
Assume that $n\to\infty$, and that $p=p(n)$ is such that 
$p/n \to t \in [0,1)$. Moreover, for  each $p$, let $\beta$ and
$\Sigma$ be a $p$-vector and a positive definite $p\times p$ matrix,
respectively, so that
$\beta'\Sigma\beta \to \delta^2 \in [0,\infty]$ as $n\to\infty$. Then
$\rho_2(\hat{\beta}_{ML}, \beta,X) \to t/(1-t)$ and
$\rho_2(\hat{\beta}{(c)}, \beta,X) \to r(\delta^2,c,t)$ in probability,
where
\begin{equation}\nonumber
r(\delta^2,c,t) \quad=\quad
\frac{t}{1-t} 
\left(
1 - c \frac{t}{t+\delta^2}
\right)^2
\;+\;
c^2 \frac{t^2 \delta^2}{(t + \delta^2)^2}
\end{equation}
if $\delta^2$  is finite, and where $r(\infty,c,t) = t/(1-t)$ otherwise
(expressions like $t/(t+\delta^2)$ are to be interpreted
as zero if $t$ and $\delta^2$ are both equal to zero).\sout{ In fact, convergence} \textcolor{red}{Convergence} of $\rho_2(\hat{\beta}{(c)},\beta,X)$ 
is uniform in $c$ over compact sets, in the sense that\sout{ $\sup_{0\leq c\leq C} | \rho_2(\hat{\beta}{(c)},\beta,X) - 
r(\delta^2,c,t)|$ converges to zero in probability} \textcolor{red}{$\sup_{0\leq c\leq C} | \rho_2(\hat{\beta}{(c)},\beta,X) - 
r(\delta^2,c,t)|=o_P(1)$} for any $C>0$.
Moreover, these statements continue to hold if $\rho_2(\hat{\beta}{(c)},\beta,X)$ is replaced by the estimator
$r(\hat{\delta}^2,c,p/n)$ where $\hat{\delta}^2 = \max\{ Y'Y/n-1,0\}$.
\end{theorem}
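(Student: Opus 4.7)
The plan is to start from the exact finite-sample formula \eqref{eq:oospe.js} in Proposition~\ref{propa1}, expressed as a quadratic polynomial in $c$, and show that each of its three coefficients converges, in probability, to the corresponding coefficient of $r(\delta^2,c,t)$ viewed as a quadratic in $c$. Since both sides are polynomials of degree two in $c$ with coefficients not depending on $c$, convergence of these coefficients automatically upgrades to uniform convergence over compact sets in $c$, settling the uniformity statement at no extra cost. So the work reduces to proving (i) $\rho_2(\hat\beta_{ML},\beta,X) = \trace(\Sigma(X'X)^{-1}) \to t/(1-t)$, (ii) $p\trace(\Sigma(X'X)^{-1})\,\E[\,1/(\hat\beta_{ML}'X'X\hat\beta_{ML})\mid X\,] \to t^2/((1-t)(t+\delta^2))$, and (iii) $p^2\,\E[\,\hat\beta_{ML}'\Sigma\hat\beta_{ML}/(\hat\beta_{ML}'X'X\hat\beta_{ML})^2\mid X\,]\to t^2(t/(1-t)+\delta^2)/(t+\delta^2)^2$, together with a negligibility claim for the $4cp$-part.

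For (i), the substitution $X=V\Sigma^{1/2}$ gives $\trace(\Sigma(X'X)^{-1})=\trace((V'V)^{-1}) = (p/n)\cdot(1/p)\sum 1/\lambda_i(V'V/n)$, which converges to $t\cdot \int x^{-1}\,dF_t(x) = t/(1-t)$ by Marchenko--Pastur (\cite{baisilverstein10}); control of the smallest eigenvalue of $V'V/n$ away from zero, which fails only on an event of probability going to zero since $t<1$, provides the required uniform integrability. For (ii) and (iii), rotate to orthonormal coordinates via $\xi = X(X'X)^{-1/2}$ and set $\eta = \xi'Y \sim N(\mu,I_p)$ with $\mu = (X'X)^{1/2}\beta$, so that $\hat\beta_{ML}'X'X\hat\beta_{ML}=\eta'\eta \sim \chi^2_p(\|\mu\|^2)$ and $\hat\beta_{ML}'\Sigma\hat\beta_{ML}=\eta'A\eta$ with $A=(X'X)^{-1/2}\Sigma(X'X)^{-1/2}$. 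The key scalar ingredient is that $\|\mu\|^2/n = \beta'X'X\beta/n \to \delta^2$ in probability; this follows from $\E[\tilde\beta'V'V\tilde\beta/n]=\|\tilde\beta\|^2$ with $\tilde\beta=\Sigma^{1/2}\beta$, together with the variance bound $O(\|\tilde\beta\|^4/n)$ from the finite fourth moment of the $V_{i,j}$. Given this, (ii) reduces to showing $p\,\E[1/\chi^2_p(\lambda_n)] \to t/(t+\delta^2)$ when $\lambda_n/n\to\delta^2$ and $p/n\to t$; using the Poisson-mixture representation $\chi^2_p(\lambda)\stackrel{d}{=}\chi^2_{p+2K}$ with $K\sim\mathrm{Poisson}(\lambda/2)$, the left side equals $\E_K[p/(p+2K-2)]$, and both mean and variance of the random denominator are directly controlled.

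The main technical obstacle lies in (iii), because the integrand involves a genuine ratio of two correlated quadratic forms in the same Gaussian vector $\eta$, and one cannot simply exchange the ratio with a ratio of means. My plan is to write
\[
\frac{\eta'A\eta}{(\eta'\eta)^2} \;=\; \frac{\eta'A\eta}{(p+\|\mu\|^2)^2}
\;+\; \frac{\eta'A\eta}{(\eta'\eta)^2}\left(1-\frac{(\eta'\eta)^2}{(p+\|\mu\|^2)^2}\right),
\]
take conditional expectations, and handle the two pieces separately. The first piece gives the leading order: $\E[\eta'A\eta\mid X] = \trace(A)+\mu'A\mu = \trace(\Sigma(X'X)^{-1})+\beta'\Sigma\beta$ (the second identity using the clean cancellation $\mu'A\mu=\beta'\Sigma\beta$), and dividing by $(p+\|\mu\|^2)^2\sim n^2(t+\delta^2)^2$ and multiplying by $p^2$ gives the stated limit. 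The second piece is an error term; to bound it I will use the concentration of $\eta'\eta/(p+\|\mu\|^2)$ around $1$ (noncentral $\chi^2$ has relative fluctuations $O(1/\sqrt{p+\|\mu\|^2})$) together with the deterministic operator-norm bound $\|A\|_{\mathrm{op}}\leq \|\Sigma\|\cdot\|(X'X)^{-1}\|$; the latter is controlled, with probability tending to one, by Bai--Silverstein lower bounds on the smallest eigenvalue of $V'V/n$. The $4cp$-part of the third term carries only one power of $p$ and is $O(1/n)$ times the $c^2 p^2$-part, hence negligible. The case $\delta^2=\infty$ is easier: the $p\E[1/\cdot]$ and $p^2\E[\cdot]$ quantities vanish because $\lambda_n\to\infty$ dominates $p$ in the denominators, so only the baseline $t/(1-t)$ survives.

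Finally, the plug-in version follows by continuity: $Y'Y/n = \beta'X'X\beta/n + 2\beta'X'u/n + u'u/n \to \delta^2 + 0 + 1$ by Step (ii)'s argument and the law of large numbers, so $\hat\delta^2 \to \delta^2$ in probability. Since $r(\cdot,c,\cdot)$ is continuous in its first and third arguments (including the limit $\delta^2=\infty$, interpreted appropriately), the convergence transfers to $r(\hat\delta^2,c,p/n)$, uniformly in $c$ over compacts.
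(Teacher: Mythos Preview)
Your plan is sound and reaches the right limits, but it diverges from the paper at the key step (iii). The paper does not expand the ratio $\eta'A\eta/(\eta'\eta)^2$ around the mean of the denominator; instead it invokes an exact identity (Proposition~\ref{propa2}, based on \cite{Bock75}) which gives, conditional on $X$,
\[
\E\!\left[\frac{\eta'A\eta}{(\eta'\eta)^2}\right]
\;=\;
\trace(A)\,\E\!\left[\frac{1}{\bigl(\chi^2_{p+2}(\|\mu\|^2)\bigr)^{2}}\right]
\;+\;
\mu'A\mu\,\E\!\left[\frac{1}{\bigl(\chi^2_{p+4}(\|\mu\|^2)\bigr)^{2}}\right],
\]
so the correlated ratio is replaced by two products of a deterministic scalar and a noncentral chi-square moment. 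After this reduction every expectation in the problem is of the form $\E[((k+\lambda)/\chi^2_k(\lambda))^m]$ and a single lemma (Lemma~\ref{chisq}) disposes of them uniformly, including the boundary regime where $p$ stays bounded but $\|\mu\|^2\to\infty$. Your expansion-plus-remainder route arrives at the same place but must control an error term that does not simplify as cleanly; in particular, for $p\in\{3,4\}$ you cannot appeal to $\E[1/(\eta'\eta)^2]$ (it is infinite), so the remainder needs a separate truncation argument. This is workable, but the Bock identity buys a shorter and more uniform proof.

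One slip to repair: the bound $\|A\|_{\mathrm{op}}\le \|\Sigma\|\,\|(X'X)^{-1}\|$ drags in the uncontrolled nuisance factor $\|\Sigma\|$ (equivalently $\kappa(\Sigma)$), on which the theorem places no restriction. The correct observation is that $A=(X'X)^{-1/2}\Sigma(X'X)^{-1/2}=B'B$ with $B=\Sigma^{1/2}(X'X)^{-1/2}$, so $A$ shares its eigenvalues with $BB'=\Sigma^{1/2}(X'X)^{-1}\Sigma^{1/2}=(V'V)^{-1}$; hence $\|A\|_{\mathrm{op}}=1/\lambda_p(V'V)$, which \emph{is} controlled by the Bai--Silverstein bound you cite and carries no dependence on $\Sigma$. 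With this fix your remainder estimate goes through. Your coefficient-matching device for uniformity in $c$ and the continuity argument for the plug-in $r(\hat\delta^2,c,p/n)$ are both fine and essentially coincide with what the paper does.
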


\begin{theorem}\label{t2}
Assume that $n\to\infty$, and that $p=p(n)$ is such that $p/n\to t\in [0,1)$.
Moreover, for each $p$, let $\Sigma$ be a positive definite $p\times p$ 
matrix. Then 
$\sup_{\beta\in \R^p} \rho_2(\hat{\beta}{(c)},\beta,X) \to
\sup_{\delta^2\geq 0} R( \delta^2, c,t)$ in probability, where
\begin{equation}\nonumber
R(\delta^2, c,t) \quad=\quad
\frac{t}{1-t} 
\left(
1 - c \frac{t}{t+\delta^2}
\right)^2
\;+\;
c^2 \frac{t^2 \delta^2}{(1-\sqrt{t})^2 (t + \delta^2)^2}
\end{equation}
(again, expressions like $t/(t+\delta^2)$ are to be interpreted
as zero if $t$ and $\delta^2$ are both equal to zero).\sout{ In fact, convergence} \textcolor{red}{Convergence} is uniform in $c$ over compact sets,
so that\sout{ $\sup_{0\leq c\leq C} | \rho_2(\hat{\beta}{(c)},\beta,X) - 
r(\delta^2,c,t)|$ converges to zero in probability} \textcolor{red}{
$\sup_{0\leq c \leq C} | 
\sup_{\beta\in \R^p} \rho_2(\hat{\beta}{(c)},\beta,X) -
\sup_{\delta^2\geq 0} R( \delta^2, c,t) |=o_P(1)$} for any  $C>0$.
\end{theorem}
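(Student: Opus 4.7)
The plan is to first express $\rho_2(\hat{\beta}(c),\beta,X)$ so that its dependence on $\beta$ is reduced to two scalars, then reduce the supremum over $\beta$ to a joint optimization over a magnitude and a direction, and finally pass to the limit using the Bai--Yin theorem for $\lambda_{\min}(V'V/n)$.

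First, I start from Proposition~\ref{propa1}, written conditionally on $X$ as
\[
\rho_2(\hat{\beta}(c),\beta,X) \;=\; T - 2cp\,T\,E_1 + (c^2 p^2 + 4cp)\,E_2,
\]
with $T=\trace(\Sigma(X'X)^{-1})$, $E_1 = \E[(\hat{\beta}_{ML}'X'X\hat{\beta}_{ML})^{-1}\mid X]$, and $E_2 = \E[\hat{\beta}_{ML}'\Sigma\hat{\beta}_{ML}/(\hat{\beta}_{ML}'X'X\hat{\beta}_{ML})^2\mid X]$. Setting $\tilde{Z}=\Sigma^{1/2}\hat{\beta}_{ML}$ and $b=\Sigma^{1/2}\beta$ gives $\tilde{Z}\mid V\sim N(b,(V'V)^{-1})$, and $\hat{\beta}_{ML}'X'X\hat{\beta}_{ML}=\tilde{Z}'V'V\tilde{Z}$ is non-central $\chi^2_p$ with noncentrality $\lambda = b'V'Vb$. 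A rotational symmetry argument in the noise variable (carried out in Proposition~\ref{propa2}) shows that $E_1$ and $E_2$ depend on $b$ only through $\alpha=\|b\|^2=\beta'\Sigma\beta$ and $\lambda=\beta'X'X\beta$. Writing $\nu=\lambda/(n\alpha)$, the Rayleigh quotient of $V'V/n$ at $b/\|b\|$, and noting that any eigenvector of $V'V/n$ can be chosen as $b/\|b\|$, I conclude that as $\beta$ varies in $\R^p$ the pair $(\alpha,\nu)$ attains every value in $[0,\infty)\times[\lambda_{\min}(V'V/n),\lambda_{\max}(V'V/n)]$, so the supremum over $\beta$ becomes a supremum over this region.

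Second, I use standard expansions of non-central chi-squared reciprocals (since $\tilde{Z}'V'V\tilde{Z}$ has mean $p+\lambda$ and variance $O(p+\lambda)$) together with the Marchenko--Pastur convergence $T\to t/(1-t)$ in probability to establish that, uniformly on compact sets of $(\alpha,\nu,c)$,
\[
\rho_2(\hat{\beta}(c),\beta,X)
\;=\;
\frac{t}{1-t}\Bigl(1-\frac{ct}{t+\alpha\nu}\Bigr)^2
+ \frac{c^2 t^2 \alpha}{(t+\alpha\nu)^2} + o_P(1),
\]
with the cross term $4cp\,E_2$ being of order $O_P(n^{-1})$. Reparametrizing by $\delta^2=\alpha\nu$, the first summand depends only on $\delta^2$ while the second becomes $c^2 t^2 \delta^2/(\nu(t+\delta^2)^2)$, strictly decreasing in $\nu$. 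Hence the inner supremum (over direction at fixed $\delta^2$) is attained at $\nu=\lambda_{\min}(V'V/n)$. By the Bai--Yin theorem (\cite{baisilverstein10}), $\lambda_{\min}(V'V/n)\to(1-\sqrt{t})^2$ in probability; substituting this and taking the outer supremum over $\delta^2\ge 0$ produces the claimed limit $\sup_{\delta^2\ge 0}R(\delta^2,c,t)$. Uniformity in $c$ over $[0,C]$ follows from joint uniformity of the chi-squared expansion in $(\alpha,\nu,c)$ together with continuity of $\sup_{\delta^2}R(\delta^2,c,t)$ in $c$ and the fact that, since $R(\delta^2,c,t)\to t/(1-t)$ as $\delta^2\to\infty$, the maximizing $\delta^2$ can be confined to a compact interval uniformly in $c\in[0,C]$.

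The main obstacle is controlling the chi-squared expansion uniformly in $(\alpha,\nu)$ across the full relevant range, in particular the boundary regimes $\alpha\to\infty$ (where one must rule out a runaway supremum, using that $\rho_2$ itself tends to $T\to t/(1-t)$ in this limit) and $\nu\to 0$ (where the noncentrality $\lambda=n\alpha\nu$ degenerates). A secondary difficulty is ensuring that the inner optimization over direction is exactly achieved at the random point $\nu=\lambda_{\min}(V'V/n)$ rather than merely approached; this is legitimate because an eigenvector attaining this value exists for every $V$, after which Bai--Yin passes things to the deterministic limit.
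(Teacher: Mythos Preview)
Your approach is essentially the same as the paper's, but the paper organizes the reduction more efficiently and thereby dispatches cleanly the uniformity issue you flag as ``the main obstacle.'' Rather than carrying the two parameters $(\alpha,\nu)$ into the limit and then optimizing, the paper first performs the inner optimization \emph{exactly} at the finite-sample level: since the formula for $\rho_2(\hat\beta(c),\beta,X)$ (your $T - 2cpTE_1 + (c^2p^2+4cp)E_2$, expanded via Proposition~\ref{propa2}) depends on $\beta$ only through the noncentrality $nd^2=\beta'X'X\beta$ and, linearly and with a positive coefficient, through $\beta'\Sigma\beta$, and since at fixed $d^2$ one has the sharp bound $\beta'\Sigma\beta\le d^2/\lambda_p(V'V/n)$ (achieved on the bottom eigenvector), the supremum over $\beta$ collapses \emph{exactly} to a one-dimensional supremum $\sup_{d\ge 0}R_\ast(d^2,c,n,p)$ in which the random factor $1/\lambda_p(V'V/n)$ already sits in place of your $1/\nu$. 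This buys a lot: one now only needs $\sup_{0\le c\le C}\sup_{d\ge 0}|R_\ast(d^2,c,n,p)-R(d^2,c,t)|\to 0$ in probability, a statement about deterministic functions of $d$ with random coefficients.

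The paper then handles the non-compactness in $d$ not by a priori localization of the maximizer but by a subsequence/compactification argument: pick (near-)maximizers $d_n$ of $|R_\ast-R|$, pass to a subsequence with $d_n\to\delta\in[0,\infty]$, and use Lemma~\ref{chisq} (which gives $\E[((k+\lambda)/\chi^2_k(\lambda))^m]\to 1$ as $k+\lambda\to\infty$, covering both $p\to\infty$ and $nd_n^2\to\infty$) to show each of the six pieces $|U_n f_n(d_n)-u f(\delta)|$ in the triangle-inequality decomposition goes to zero. Your sketch gestures at this with ``$\rho_2$ tends to $T$ as $\alpha\to\infty$,'' but that alone does not give the required uniformity; the clean device is the exact one-parameter reduction followed by the $[0,\infty]$-compactification of $d$. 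Your secondary worry about $\nu\to 0$ is a non-issue once one works with $d^2$: the range of $\nu$ is $[\lambda_p(V'V/n),\lambda_1(V'V/n)]$, and Lemma~\ref{mp2} keeps $\lambda_p(V'V/n)$ bounded away from zero in probability when $t<1$.
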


\begin{remark*}\normalfont
The quantities $r(\delta^2,c,t)$ and $R(\delta^2,c,t)$, as defined in\sout{ theorems}
\textcolor{red}{Theorems}~\ref{t1} and~\ref{t2}, respectively, differ by a factor of
$(1-\sqrt{t})^2$ in the denominator of the last term. 
The proofs reveal that this factor is caused by the fact that
$\beta'(X'X/n) \beta$ can differ considerably from its expectation\sout{,
i.e., from $\beta'\Sigma \beta$,} if $p/n$ is not small, because of the gap
between the smallest eigenvalue of a large-dimensional random Wishart matrix
(or Gram matrix) and the smallest eigenvalue of its expectation, a well-known phenomenon
in the theory of random matrices; see, for
example,~\cite{baisilverstein10}. In particular, in the setting of
Theorem~\ref{t1} with $\delta^2=1$, $\beta'(X'X/n) \beta$
converges to\sout{ 1} \textcolor{red}{one} in probability, but $\inf_{b:b'\Sigma b=1} b'(X'X/n)b$
converges to $(1-\sqrt{t})^2$.
\end{remark*}

The approximations to $\rho_2(\hat{\beta}(c),\beta,X)$ and 
$\rho_2(\hat{\beta}_{ML},\beta,X)$ provided by Theorem~\ref{t1}, i.e.,
$r(\delta^2,c,t)$ and $t/(1-t)$, respectively, are such that 
$r(\delta^2,c,t) < t/(1-t)$ whenever $t>0$, provided only that
$0<c\leq 2$ and $\delta^2<\infty$; cf. the dashed line in Figure~1.
The results of \cite{dicker12}\sout{ (who requires that $X$ is Gaussian and $p\to\infty$)} suggest approximations to
$\E[\rho_2(\hat{\beta}(c),\beta,X)]$ and 
$\E[\rho_2(\hat{\beta}_{ML},\beta,X)]$
that coincide with $r(\delta^2,c,t)$ and $t/(1-t)$, respectively.

Theorems~\ref{t1} and~\ref{t2} together with the attending remark
also provide us with a more precise description of the phenomenon in
\eqref{eq:bad} and \eqref{eq:good}, and with a better understanding of the
underlying cause. More formally, we have the following result.

\begin{theorem}\label{t3}
Assume that $n\to\infty$, and that $p=p(n)$ is such that $p/n\to t\in [0,1)$.
Moreover, for each $p$, let $\Sigma$ be a positive definite $p\times p$ 
matrix. If $t > 1/9$, then  the out-of-sample prediction errors of
the\sout{ James-Stein} \textcolor{red}{James--Stein} estimator and of the maximum-likelihood estimator 
are such that
\begin{equation}\label{t3.1}
\P\left(
\sup_{\beta \in \R^p}  
\rho_2( \hat{\beta}_{JS}, \beta, X) - 
	\rho_2(\hat{\beta}_{ML}, \beta, X)
	\,> \, \epsilon\right) \quad \xout{\stackrel{n\to\infty}{\longrightarrow}}
	\textcolor{red}{\to}  \quad 1
\end{equation}
for some $\epsilon>0$ (that is given explicitly in the 
proof).
And if $t \leq 1/9$, then  the expression on the left-hand side of
\eqref{t3.1} converges to zero as $n\to\infty$ for each
$\epsilon>0$.
Finally, irrespective of the value of $t\in [0,1)$, we have
\begin{equation}\label{t3.2}
\sup_{\beta \in \R^p}  
\P\left(
\rho_2( \hat{\beta}_{JS}, \beta, X) - 
	\rho_2(\hat{\beta}_{ML}, \beta, X)
	\,> \, \epsilon\right) \quad \xout{\stackrel{n\to\infty}{\longrightarrow}}
	\textcolor{red}{\to}  \quad 0
\end{equation}
for each $\epsilon>0$.

More generally, consider tuning-parameters $c_n\geq 0$ that converge
to a limit $c \in [0,\infty)$ as $n\to\infty$, and consider
the expression on the left-hand side of
\eqref{t3.1} with $\hat{\beta}(c_n)$ replacing $\hat{\beta}_{JS}$.
The resulting expression converges to one as $n\to\infty$ for some $\epsilon>0$
in the case where
$0 \leq  c \leq 2$ and\sout{ $t > ((c-2)/(c+2))^2$} \textcolor{red}{$t > [(c-2)/(c+2)]^2$}, and in the case where
$c> 2$ and $t>0$.
In all other cases,
the resulting expression converges to zero for each $\epsilon>0$.
And \eqref{t3.2} holds for each $\epsilon >0$ with
$\hat{\beta}(c_n)$ replacing $\hat{\beta}_{JS}$
if $c \leq 2$, irrespective of $t$.
\end{theorem}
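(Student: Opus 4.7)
The plan is to derive the result from Theorems~\ref{t1} and~\ref{t2} together with an analysis of the limit functions $r(\delta^2,c,t)$ and $R(\delta^2,c,t)$. Since $\hat{\beta}_{JS}$ corresponds to $c_n=(p-2)/p\to 1$, I work with a general sequence $c_n\to c\in[0,\infty)$. The uniformity in $c$ on compact sets in both earlier theorems, together with joint continuity of $R(\delta^2,c,t)$ in $c$, yields convergence in probability of $\rho_2(\hat{\beta}_{ML},\beta,X)$ to $t/(1-t)$ and of $\sup_{\beta\in\R^p}\rho_2(\hat{\beta}(c_n),\beta,X)$ to $\sup_{\delta^2\geq 0}R(\delta^2,c,t)$; moreover, along any sequence $(\beta_n)$ with $\beta_n'\Sigma\beta_n\to\delta^2\in[0,\infty]$, Theorem~\ref{t1} gives $\rho_2(\hat{\beta}(c_n),\beta_n,X)\to r(\delta^2,c,t)$ in probability.

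Next, I analyze $R$ and $r$ via the substitution $v=t/(t+\delta^2)\in[0,1]$. A direct computation rewrites $R(\delta^2,c,t)$ as the quadratic
\begin{equation*}
\frac{t}{1-t}(1-cv)^2 \,+\, \frac{c^2 t}{(1-\sqrt{t})^2}\, v(1-v)
\end{equation*}
in $v$, whose coefficient of $v^2$ is nonpositive (since $(1-\sqrt{t})^2\leq 1-t$), so the quadratic is concave. Its value at $v=0$ (i.e.\ $\delta^2=\infty$) equals $t/(1-t)$, and its derivative there, $tc[c/(1-\sqrt{t})^2 - 2/(1-t)]$, is strictly positive iff $\sqrt{t}(c+2)>2-c$, i.e.\ iff $t>[(2-c)/(c+2)]^2$ (which reduces to $t>0$ when $c>2$). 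Concavity then gives $\sup_{\delta^2\geq 0}R(\delta^2,c,t)>t/(1-t)$ exactly in that regime and equality otherwise. The same substitution rewrites $r(\delta^2,c,t)-t/(1-t)$ as $\tfrac{tcv}{1-t}[c(1-t+tv)-2]$, which is $\leq 0$ for every $v\in[0,1]$ whenever $c\in[0,2]$.

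For \eqref{t3.1}, the first paragraph shows that $\sup_\beta[\rho_2(\hat{\beta}(c_n),\beta,X) - \rho_2(\hat{\beta}_{ML},\beta,X)]$ converges in probability to $\sup_{\delta^2\geq 0}R(\delta^2,c,t) - t/(1-t)$, and by the second paragraph this is a strictly positive constant $\epsilon_0$ or exactly zero according to the regime, giving \eqref{t3.1} (with $\epsilon=\epsilon_0/2$) and its converse; specializing $c=1$ recovers the threshold $t>1/9=[(2-1)/(1+2)]^2$. For \eqref{t3.2}, I argue by contradiction: if $\sup_\beta \P[\rho_2(\hat{\beta}(c_n),\beta,X) - \rho_2(\hat{\beta}_{ML},\beta,X)>\epsilon]\not\to 0$, extract $n_k\to\infty$ and $\beta_{n_k}\in\R^{p(n_k)}$ along which the probabilities stay above some $\eta>0$; by compactness of $[0,\infty]$, pass to a further subsequence with $\beta_{n_k}'\Sigma\beta_{n_k}\to\delta_\ast^2$, apply the pointwise convergence from the first paragraph, and conclude by $r(\delta_\ast^2,c,t)\leq t/(1-t)$ (valid since $c\leq 2$).

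The main obstacle is the uniformity in $\beta$ needed for \eqref{t3.2}, which cannot be read off Theorem~\ref{t1} directly: the subsequence/compactness argument requires the one-point compactification $[0,\infty]$ and continuous dependence of the limit on $\delta^2$ (including the boundary case $\delta^2=\infty$, where both risks concentrate at $t/(1-t)$), a feature conveniently encoded by the $v$-substitution. A minor auxiliary step is continuity of $c\mapsto\sup_{\delta^2\geq 0}R(\delta^2,c,t)$, needed when moving from $c_n$ to $c$ in Theorem~\ref{t2}; this follows immediately from the explicit concave-quadratic form derived above.
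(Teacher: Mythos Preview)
Your proof is correct and follows essentially the same route as the paper: reduce \eqref{t3.1} to the sign of $\sup_{\delta^2\geq 0}R(\delta^2,c,t)-t/(1-t)$ via Theorems~\ref{t1} and~\ref{t2}, and handle \eqref{t3.2} by a subsequence/compactness argument on $\beta'\Sigma\beta\in[0,\infty]$ combined with $r(\delta^2,c,t)\leq t/(1-t)$ for $c\leq 2$. The only substantive difference is that the paper declares the two key inequalities on $R$ and $r$ ``elementary but tedious to verify'' and sets $\epsilon=(R_\ast-t/(1-t))/2$, whereas you actually carry out the verification via the substitution $v=t/(t+\delta^2)$, obtaining the concave quadratic form for $R$ and the factorization $\tfrac{tcv}{1-t}[c(1-t+tv)-2]$ for $r-t/(1-t)$; this makes the threshold $t=[(c-2)/(c+2)]^2$ (hence $1/9$ for $c=1$) transparent rather than asserted.
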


\begin{remark*}\normalfont
In the setting of Theorem~\ref{t3}, it is easy to see
that relation \eqref{t3.1} holds
uniformly over all pairs of $n$ and $p$ subject to $1/9+\delta\leq p/n
\leq 1-\delta$;
in addition, \eqref{t3.2} holds uniformly over all
such pairs with $p/n \leq 1-\delta$,
for each $\delta>0$, subject to $n\geq p \geq 3$
(and also uniformly over all\sout{ $p\times p$ matrices $\Sigma>0$} \textcolor{red}{positive definite $p\times p$ matrices $\Sigma$}).
Similar statements also apply with $\hat{\beta}(c_n)$ replacing
$\hat{\beta}_{JS}$, mutatis mutandis.
\end{remark*}

Through relations \eqref{t3.1} and \eqref{t3.2},
Theorem~\ref{t3} provides two complementing views on
the worst-case performance of\sout{ James-Stein-type} \textcolor{red}{James--Stein-type} shrinkage estimators. If the expression on the left-hand side of  \eqref{t3.1} is large, then 
the\sout{ James-Stein} \textcolor{red}{James--Stein} estimator $\hat{\beta}_{JS}$ 
is typically outperformed, from a worst-case perspective,  by
the maximum-likelihood estimator $\hat{\beta}_{ML}$ 
(whose out-of-sample prediction error is constant in $\beta$).
Here, `typically' means that for most realizations of the design matrix
$X$ there is a parameter $\beta$ for which $\hat{\beta}_{JS}$ 
performs worse than $\hat{\beta}_{ML}$.
By Theorem~\ref{t3}, we see that this occurs with probability approaching
one in the statistically challenging case where $t>1/9$
(while this occurs with probability approaching zero in the case where
$t\leq 1/9$\sout{, e.g., in cases where  the sample size exceeds the number
of regressors in the model by a factor of $9$ or more}).
On the other hand, if the expression on the left-hand side of \eqref{t3.2}
is small, then with high probability $X$ is such that
$\hat{\beta}_{JS}$ outperforms $\hat{\beta}_{ML}$, uniformly in $\beta$. In the setting of Theorem~\ref{t3}, 
the left hand-side of \eqref{t3.2} always converges to zero.

If $\rho_2(\cdot, \cdot, X)$ is used as a risk-function, 
then \eqref{t3.1} entails that $\hat{\beta}_{JS}$
is outperformed by $\hat{\beta}_{ML}$ in terms of worst-case risk,
for most realizations of the design matrix $X$.\sout{ Note that this} \textcolor{red}{This} is a  worst-case perspective, as is often adopted 
in frequentist statistical analyses.
But the most unfavorable parameter $\beta$, for which
$\rho_2(\hat{\beta}_{JS},\beta,X)$ is maximized, heavily depends on $X$.
In particular, the relation in \eqref{t3.2} entails,
for any fixed parameter $\beta$, that the probability, that $X$ is such that
$\beta$ is unfavorable, is small.

\section{\sout{Conclusions} \textcolor{red}{Discussion}}
\label{s4}

We have derived explicit finite sample formulae for the out-of-sample
prediction error of the\sout{ James-Stein} \textcolor{red}{James--Stein} estimator and of related\sout{ James-Stein-type} \textcolor{red}{James--Stein-type} shrinkage estimators in a linear regression model
with Gaussian errors and fixed design.
In an example with a particular design matrix $X$,
we have found that the\sout{ James-Stein} \textcolor{red}{James--Stein} estimator no longer dominates the 
maximum-likelihood estimator.
We have shown that this phenomenon generally occurs for most design
matrices $X$ if the ratio  of the number of explanatory variables in the model
($p$) and the sample size ($n$)\sout{, i.e, $p/n$,} exceeds $1/9$,
in the sense of statement \eqref{t3.1} of Theorem~\ref{t3}.
At the same time, we have also shown that the\sout{ James-Stein} \textcolor{red}{James--Stein} estimator
outperforms the maximum-likelihood estimator for most design matrices
$X$, uniformly in the underlying parameters, in the sense
of statement \eqref{t3.2} of Theorem~\ref{t3}.
Our findings suggest that the\sout{ James-Stein} \textcolor{red}{James--Stein} estimator can perform
poorly for prediction out-of-sample from a frequentist worst-case
perspective. But our findings also suggest, in the setting considered
here, that the worst-case
performance does not properly reflect the performance in the typical case,
and that the\sout{ James-Stein} \textcolor{red}{James--Stein} estimator performs favorably compared to 
maximum-likelihood in the typical case, uniformly in the underlying parameters.

The phenomenon, that the\sout{ James-Stein} \textcolor{red}{James--Stein} estimator dominates the maximum-likelihood
estimator for in-sample prediction but can fail to do so for
prediction out-of-sample, is linked to the fact that the eigenvalues
of $X'X/n$ can differ from the eigenvalues of $\Sigma$; see relations
\eqref{eq:ispe} and \eqref{eq:oospe}, as well as the remark following
Theorem~\ref{t2}. We therefore expect that other estimators,
that are designed to perform well for in-sample prediction,
can exhibit similar phenomena when used for prediction out-of-sample.
This includes various shrinkage estimators like 
estimators based on model selection, penalized maximum-likelihood,
and other forms of regularization. Although beyond the scope of this
paper, it would be particularly interesting to study\sout{ the so-called} bridge estimators
\citep{frankfriedman93},\sout{ and,} in particular, the LASSO
\citep{tibshirani96} \textcolor{red}{and ridge regression \citep{hoerlkennard70}}, as well as the Dantzig selector \citep{candestao07} with
regards to their performance as predictors out-of-sample when $p/n$
is not small.

\begin{appendix}

\section{Technical details for Section~\ref{s2}}
\label{appendixA}

Recall that $\hat{\beta}_{ML} \sim N(\beta, (X'X)^{-1})$,
and define $Z$ and $\zeta$ as $Z = (X'X)^{1/2}\hat{\beta}_{ML}$
and $\zeta = (X'X)^{1/2} \beta$, respectively, where $(X'X)^{1/2}$ denotes a
symmetric square root of $X'X$. Note that we have $Z \sim N(\zeta,I_p)$,
that the unknown parameter $\beta\in \R^p$ corresponds to the unknown
parameter $\zeta\in \R^p$ via $\beta = (X'X)^{-1/2} \zeta$, and that
any estimator $\tilde{\beta}$ for $\beta$ corresponds to an estimator
$\tilde{\zeta}$ for $\zeta$ via the relation 
$\tilde{\beta} = (X'X)^{-1/2}\tilde{\zeta}$, and vice versa.
In the Gaussian location model $Z \sim N(\zeta,I_p)$ with
$\zeta\in \R^p$, $p\geq 3$, 
consider the\sout{ James-Stein-type} \textcolor{red}{James--Stein-type} shrinkage estimator
$\hat{\zeta}{(c)} = (1-c p /Z'Z)Z$ with 
tuning parameter $c\geq 0$.
The estimator $\hat{\zeta}{(c)}$ for $\zeta$  corresponds to the
estimator $(X'X)^{-1/2}\hat{\zeta}{(c)}$ for $\beta$, and
it is elementary to verify that
$(X'X)^{-1/2}\hat{\zeta}{(c)}$ equals $\hat{\beta}{(c)}$.

\begin{proof} [\bf Proof of Proposition~\ref{propa1}:]
Define $Z$, $\zeta$ and $\hat{\zeta}{(c)}$ as in the preceding
paragraph. For the in-sample prediction error of the maximum-likelihood estimator, note that $\rho_1(\hat{\beta}_{ML}, \beta,X) =
(1/n) \E[(Z - \zeta)'(Z - \zeta)]=p/n$. For relation~\eqref{eq:ispe.js}, recall that $\hat{\zeta}{(c)}$ satisfies
$\E[(\hat{\zeta}{(c)} - \zeta)'(\hat{\zeta}{(c)} - \zeta)]
=p - [2 c p (p-2) - c^2 p^2] \E[ 1/Z'Z]$; cf. \cite{jamesstein61}.\sout{ It is now elementary to} \textcolor{red}{To} verify that this equality is equivalent to\sout{ the equality in} \eqref{eq:ispe.js}\sout{. [First}\textcolor{red}{, first} note that the expression on the left-hand side
of this equality satisfies
$\E[(\hat{\zeta}{(c)}-\zeta)'(\hat{\zeta}{(c)}-\zeta)] 
= n \rho_1(\hat{\beta}{(c)}, \beta, X)$ (by
plugging the definitions of $\hat{\zeta}{(c)}$, $Z$ and $\zeta$ 
into the left-hand side, and by recalling the definitions of
$\hat{\beta}{(c)}$ and $\rho_1(\hat{\beta}{(c)},\beta, X)$).
And, in a similar fashion, the right-hand side of that equality
is equal to the expression on the right-hand side of \eqref{eq:ispe.js},
multiplied by $n$.\sout{]}

For\sout{ relation} \eqref{eq:oospe.js}, we first use\sout{ the definition
of the out-of-sample prediction error, cf.} \eqref{eq:oospe}
together with the definition of $\hat{\beta}{(c)}$ to obtain that 
\begin{align}
\label{tmp1}
\rho_2(\hat{\beta}{(c)},\beta,X)\quad=\quad
\rho_2(\hat{\beta}_{ML},\beta,X) 
\;-\; 2 c p \E\left[
\frac{\hat{\beta}_{ML}' \Sigma(\hat{\beta}_{ML}-\beta)}{
\hat{\beta}_{ML}'X'X \hat{\beta}_{ML}}
\right]
\;+\; c^2 p^2 \E\left[
\frac{
	\hat{\beta}_{ML}'\Sigma \hat{\beta}_{ML}
}{
( \hat{\beta}_{ML}'X'X \hat{\beta}_{ML})^2
}
\right].
\end{align}
The expected value in the second term on the right-hand side of~\eqref{tmp1}
can be written as
\begin{align}\label{tmp2}
\E \left[
\frac{Z' T (Z - \zeta) }{ Z'Z }
\right]
\quad=\quad
\sum_{i=1}^p
\E\left[
	\frac{
		Z_i T_{i,i} (Z_i - \zeta_i)
	}{
	\sum_{k=1}^p Z_k^2
	}
\right]
\;+\;
\sum_{\stackrel{i,j =1}{i\neq j}}^p
\E\left[
	\frac{
		Z_j T_{j,i} (Z_i - \zeta_i)
	}{
	\sum_{k=1}^p Z_k^2
	}
\right],
\end{align}
where $Z$ and $\zeta$ have been defined earlier,  and where\sout{ we write $T = (T_{i,j})_{i,j=1}^p$ as shorthand for the matrix} \textcolor{red}{$T = (X'X)^{-1/2} \Sigma (X'X)^{-1/2}$.}
For each of the
terms in the first sum on the right-hand side of \eqref{tmp2},  we obtain that
\begin{align*}
\E\left[
	\frac{
		Z_i T_{i,i} (Z_i - \zeta_i)
	}{
	\sum_{k=1}^p Z_k^2
	}
\right]
&\quad=\quad
\E \left[
	\frac{
		T_{i,i} 
	}{
	\sum_{k=1}^p Z_k^2
	}
\right]
\;-\;
2 \E \left[
	\frac{
		T_{i,i}  Z_i^2
	}{
	(\sum_{k=1}^p Z_k^2)^2
	}
\right]
\end{align*}
upon using Stein's Lemma conditional on the $Z_k$'s with $k\neq i$.
And each of the terms in the second (double) sum on the right-hand side
of~\eqref{tmp2} can be written as
\begin{align*}
\E\left[
	\frac{
		Z_j T_{j,i} (Z_i - \zeta_i)
	}{
	\sum_{k=1}^p Z_k^2
	}
\right]
\quad=\quad 
-\; 2 \E \left[
	\frac{
		T_{j,i}  Z_i Z_j
	}{
	(\sum_{k=1}^p Z_k^2)^2
	}
\right],
\end{align*}
again by using Stein's Lemma conditional on the $Z_k$'s with $k\neq i$.
Using the preceding two equalities, we can write the right-hand side
of \eqref{tmp2} as
$$
\trace(T)\E\left[ \frac{1 }{ Z'Z} \right]
\;-\;
2 \E\left[ \frac{Z'T Z}{ (Z'Z)^2} \right]
\quad=\quad
\trace(\Sigma (X'X)^{-1}) \E\left[
\frac{1}{ 
\hat{\beta}_{ML}'X'X\hat{\beta}_{ML}}
\right]
\;-\; 
2 \E \left[
\frac{\hat{\beta}_{ML}' \Sigma\hat{\beta}_{ML}} {
(\hat{\beta}_{ML}'X'X\hat{\beta}_{ML})^2}
\right],
$$
where the equality is obtained by using the definitions of $T$ and $Z$\sout{, together with the fact that $\trace( A B) = \trace(B A)$
for matrices $A$ and $B$ of appropriate dimensions}. Recall that \eqref{tmp2} or, equivalently, the right-hand side of the preceding display, is equal to the expected value in the second term on the right-hand side of \eqref{tmp1}. Plugging this into \eqref{tmp1} and simplifying, we obtain
\eqref{eq:oospe.js}. Furthermore,\sout{ note that} $\rho_2(\hat{\beta}_{ML}, \beta, X)= \trace( \Sigma
\E[(\hat{\beta}_{ML}-\beta)(\hat{\beta}_{ML}-\beta)']=\trace(\Sigma (X'X)^{-1})$.
\end{proof}

The next result shows how\sout{  the expressions in} \eqref{eq:ispe.js} and \eqref{eq:oospe.js} 
depend on $\beta$ and on $X$; moreover, that result can also be used
to compute these expressions 
numerically.

\begin{appxprop}\label{propa2} \sout{In the setting of Section~\ref{s2}, }
\textcolor{red}{Assume that the linear model in \eqref{eq:model} holds and that $\Sigma$ is a positive definite $p \times p$ matrix.} \textcolor{red}{Then} the expected values in 
\eqref{eq:ispe.js} and \eqref{eq:oospe.js} 
can be written as
\begin{align*}
\E \left[
\frac{1}{\hat{\beta}_{ML}' X'X \hat{\beta}_{ML}}
\right]
& \quad=\quad 
\E \left[
	\frac{1}{\chi^2_p(\beta' X'X \beta)}
\right],
\\
\E \left[
\frac{\hat{\beta}_{ML}' \Sigma \hat{\beta}_{ML}
}{
\left(\hat{\beta}_{ML}' X'X \hat{\beta}_{ML}\right)^2
}
\right]
& \quad=\quad 
\E\left[
\frac{\trace(\Sigma(X'X)^{-1}) 
}{\left( \chi^2_{p+2}(\beta' X'X\beta)\right)^2}
\right]
\;+\;
\E\left[
\frac{ \beta'\Sigma\beta 
}{\left( \chi^2_{p+4}(\beta' X'X\beta)\right)^2}
\right],
\end{align*}
where, for each $k\geq 1$ and each $\lambda\geq 0$,
the symbol $\chi^2_k(\lambda)$ denotes
a random variable that is chi-square distributed with $k$ degrees
of freedom and non-centrality parameter $\lambda$.
\end{appxprop}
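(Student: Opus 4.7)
The first identity is immediate. With $Z = (X'X)^{1/2}\hat{\beta}_{ML}$ and $\zeta = (X'X)^{1/2}\beta$ as in the paragraph preceding the proof of Proposition~\ref{propa1}, we have $Z \sim N(\zeta, I_p)$ and $\hat{\beta}_{ML}' X'X \hat{\beta}_{ML} = Z'Z \sim \chi^2_p(\zeta'\zeta)$, so that $\E[1/(\hat{\beta}_{ML}'X'X\hat{\beta}_{ML})] = \E[1/\chi^2_p(\beta'X'X\beta)]$ since $\zeta'\zeta = \beta'X'X\beta$.

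For the second identity, my plan is to introduce $T = (X'X)^{-1/2}\Sigma(X'X)^{-1/2}$ so that $\hat{\beta}_{ML}'\Sigma\hat{\beta}_{ML} = Z'TZ$, $\trace(T) = \trace(\Sigma(X'X)^{-1})$, and $\zeta'T\zeta = \beta'\Sigma\beta$; and to set $\lambda = \zeta'\zeta$. Rotating by an orthogonal $R$ with $R\zeta = \sqrt{\lambda}\,e_1$ (the first standard basis vector) preserves $Z'Z$, $Z'TZ$, $\trace(T)$ and $\zeta'T\zeta$, so I may assume $\zeta = \sqrt{\lambda}\,e_1$. Then $Z_1 \sim N(\sqrt{\lambda},1)$ and $Z_2,\dots,Z_p$ are iid $N(0,1)$, all independent. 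Sign-flip symmetry in any $Z_k$ with $k\geq 2$ forces $\E[Z_iZ_j/(Z'Z)^2] = 0$ for every $i\neq j$, leaving only the diagonal contributions $\sum_k T_{kk}\E[Z_k^2/(Z'Z)^2]$.

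For each $k\geq 2$, I would condition on $S_k := Z'Z - Z_k^2 \sim \chi^2_{p-1}(\lambda)$ (independent of $Z_k$) and apply the elementary identity $\E[Z_k^2\,h(Z_k^2+s)] = \E[h(\chi^2_3 + s)]$ for $Z_k \sim N(0,1)$, obtained via the substitution $u = z^2$ in the Gaussian integral. Combined with $\chi^2_3 + \chi^2_{p-1}(\lambda) \stackrel{d}{=} \chi^2_{p+2}(\lambda)$, this yields $\E[Z_k^2/(Z'Z)^2] = \E[1/(\chi^2_{p+2}(\lambda))^2]$ for every $k\geq 2$. Then $Z_1^2 = Z'Z - \sum_{k\geq 2} Z_k^2$ and symmetry give
\[
\E\!\left[\frac{Z_1^2}{(Z'Z)^2}\right] \;=\; \E\!\left[\frac{1}{\chi^2_p(\lambda)}\right] - (p-1)\,\E\!\left[\frac{1}{(\chi^2_{p+2}(\lambda))^2}\right].
\]

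The main obstacle is to rewrite the right-hand side of the display as $\E[1/(\chi^2_{p+2}(\lambda))^2] + \lambda\,\E[1/(\chi^2_{p+4}(\lambda))^2]$. This reduces to the raising identity
$\E[1/\chi^2_p(\lambda)] = p\,\E[1/(\chi^2_{p+2}(\lambda))^2] + \lambda\,\E[1/(\chi^2_{p+4}(\lambda))^2]$,
which I would derive from the Poisson-mixture representation $\chi^2_p(\lambda) \stackrel{d}{=} \chi^2_{p+2N}$ with $N \sim \mathrm{Poisson}(\lambda/2)$, combined with the standard central rule $\E[\chi^2_k h(\chi^2_k)] = k\,\E[h(\chi^2_{k+2})]$ applied to $h(u) = 1/u^2$ and averaged over $N$ (the $N$-weighting contributes the $\lambda$-term through $2\E[N f(N)] = \lambda\,\E[f(N+1)]$). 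Since $T_{11} = \zeta'T\zeta/\lambda$ in the rotated frame, $\sum_k T_{kk}\E[Z_k^2/(Z'Z)^2]$ then simplifies to $\trace(T)\,\E[1/(\chi^2_{p+2}(\lambda))^2] + \zeta'T\zeta\,\E[1/(\chi^2_{p+4}(\lambda))^2]$, which is the asserted identity upon substituting back $\trace(T) = \trace(\Sigma(X'X)^{-1})$ and $\zeta'T\zeta = \beta'\Sigma\beta$.
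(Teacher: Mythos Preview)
Your argument is correct and, in fact, gives considerably more than the paper does. The paper's own proof of the second identity is a one-line citation: it invokes Corollary~2 from the appendix of \cite{Bock75} without further comment. You have instead supplied a self-contained derivation of that result.

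The two approaches are related in spirit---Bock's corollary is precisely the statement that for $Z\sim N(\zeta,I_p)$ and symmetric $T$ one has $\E[Z'TZ\, h(Z'Z)] = \trace(T)\,\E[h(\chi^2_{p+2}(\lambda))] + \zeta'T\zeta\,\E[h(\chi^2_{p+4}(\lambda))]$---but your route to it (rotation to align $\zeta$ with $e_1$, sign-flip symmetry to kill off-diagonal terms, the $\chi^2_1\mapsto\chi^2_3$ weighting trick for the centered coordinates, and the Poisson--mixture raising identity for the noncentral coordinate) is elementary and transparent, whereas the paper simply outsources the work. What the citation buys is brevity; what your derivation buys is that the reader need not consult \cite{Bock75}.

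Two small points of presentation. First, when you rotate, $T$ must be replaced by $RTR'$; the invariants you list are preserved under this joint transformation, not under rotating $Z$ alone, so it is worth saying explicitly that you relabel $(RZ,R\zeta,RTR')$ as $(Z,\zeta,T)$. Second, the identification $T_{11}=\zeta'T\zeta/\lambda$ presumes $\lambda>0$; the case $\lambda=0$ is trivially covered (all $Z_k$ are then centered and exchangeable, so the $\zeta'T\zeta$ term vanishes and only the trace term survives), but a one-line remark would close that gap.
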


\begin{proof}
The first equality follows upon recalling that
$X \hat{\beta}_{ML}$ follows a Gaussian distribution with 
mean $X \beta$ and covariance matrix $X (X'X)^{-1} X'$.
The second equality is obtained by applying Corollary 2 from the
appendix of \cite{Bock75}.
\end{proof}

\section{Technical details for Section~\ref{s3}}

We start with\sout{ a couple of} \textcolor{red}{some} auxiliary results, and then\sout{ proceed to} prove the results from Section~\ref{s3}.

\begin{appxlem}
\label{mp1}
For $t\geq 0$, $a=(1-\sqrt{t})^2$ and $b=(1+\sqrt{t})^2$, we have
\begin{equation*}
\int_a^b \frac{1}{x^2} \sqrt{(b-x)(x-a)} \text{d} x = 2 \pi 
\frac{\min\{1,t\}}{|1-t|},
\end{equation*}
where the expression on the right-hand side of the preceding display
is to be interpreted as $\infty$ in case $t=1$.
\end{appxlem}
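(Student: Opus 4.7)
The plan is to reduce the integral to two classical one-dimensional integrals on $[a,b]$ by integration by parts, and then to evaluate those by a standard trigonometric substitution.

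For the main case $t>0$, $t\neq 1$, I would take $u = \sqrt{(b-x)(x-a)}$ and $dv = x^{-2}\,dx$. Because $u$ vanishes at both endpoints, the boundary term drops out; using the identity $\frac{d}{dx}\sqrt{(b-x)(x-a)} = (a+b-2x)/(2\sqrt{(b-x)(x-a)})$, the integral reduces to $\frac{a+b}{2}J_1 - J_0$, where
\[
J_0 \;=\; \int_a^b \frac{dx}{\sqrt{(b-x)(x-a)}}, \qquad J_1 \;=\; \int_a^b \frac{dx}{x\sqrt{(b-x)(x-a)}}.
\]
Both are classical. The substitution $x = \frac{a+b}{2} + \frac{b-a}{2}\cos\theta$ converts $J_0$ into $\int_0^\pi d\theta = \pi$, and converts $J_1$ into $\int_0^\pi d\theta/(A + B\cos\theta)$ with $A = (a+b)/2$ and $B = (b-a)/2$, which by the standard Weierstrass-type formula equals $\pi/\sqrt{A^2 - B^2} = \pi/\sqrt{ab}$.

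To conclude, I would plug in $a = (1-\sqrt{t})^2$ and $b = (1+\sqrt{t})^2$, so that $a+b = 2(1+t)$ and $\sqrt{ab} = |1-t|$, giving $\pi\bigl((1+t) - |1-t|\bigr)/|1-t|$, which collapses to $2\pi\min\{1,t\}/|1-t|$ via the elementary identity $(1+t) - |1-t| = 2\min\{1,t\}$. The main obstacle, though minor, is keeping track of the absolute value in $\sqrt{ab}$ across the two regimes $t<1$ and $t>1$, which I would handle by a clean case split.

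The boundary cases are easy to dispatch separately. For $t = 0$ the interval of integration collapses ($a=b=1$) and both sides vanish. For $t = 1$ one has $a = 0$, and although the integration-by-parts reduction formally still applies, the intermediate quantity $J_1$ diverges at $x=0$; inspecting the original integrand $\sqrt{(b-x)(x-a)}/x^2$ near $x = 0$ shows behavior of order $2/x^{3/2}$, so the integral is $+\infty$, in agreement with the stated convention.
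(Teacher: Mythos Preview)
Your proof is correct and takes a genuinely different route from the paper's. The paper applies the substitution $z=\sqrt{(b-x)/(x-a)}$, which turns the integral into
\[
2(b-a)^2\int_0^\infty \frac{z^2}{(az^2+b)^2(z^2+1)}\,dz,
\]
and then proceeds by a partial-fraction decomposition together with the $\arctan$ antiderivative and its recursion. Your approach instead removes the $1/x^2$ via integration by parts and reduces the problem to the two classical integrals $J_0=\pi$ and $J_1=\pi/\sqrt{ab}$, which are dispatched by a single trigonometric substitution. Both arguments land on the same intermediate form $\pi(\sqrt{b}-\sqrt{a})^2/(2\sqrt{ab})$ before the final simplification. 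Your route is shorter and avoids partial fractions entirely, at the cost of quoting the standard evaluation of $\int_0^\pi d\theta/(A+B\cos\theta)$; the paper's route is more self-contained but heavier on algebra. The boundary cases $t=0$ and $t=1$ are handled essentially the same way in both.
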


\begin{proof}
The case $t=0$ is trivial. In the case where $t >0$, 
the integral of interest can be written as
\begin{align*}
\int_a^b \frac{1}{x^2} \sqrt{(b-x)(x-a)} \textcolor{red}{\text{d}} x  &\quad =\quad
2(b-a)^2\int_0^{\infty} \frac{z^2}{(az^2+b)^2(z^2+1)} \textcolor{red}{\text{d}}z \\
&\quad =\quad 2(b-a)^2 \int_0^{\infty} 
\textcolor{red}{\left[ \frac{a}{(b-a)^2}\frac{1}{(az^2+b)} +
\frac{b}{b-a}\frac{1}{(az^2+b)^2} -
\frac{1}{(b-a)^2}\frac{1}{z^2+1} \right] \text{d}}z,
\end{align*}
where the first equality is obtained by the substitution
$z = \sqrt{(b-x)/(x-a)}$, and the second equality follows from a 
partial fraction decomposition.
Recall that $\int_0^w (u^2+1)^{-1} \text{d} u = \arctan(w)$
and that $2 k \int_0^w (u^2+1)^{-(1+k)} \text{d} u = 
w (w^2+1)^{-k}  + (2k-1) \int_0^w (u^2+1)^{-k} \text{d} u $
whenever $k >0$.
With this, and for the case where $t\neq 1$,
it\sout{ is elementary to verify} \textcolor{red}{follows} that the
integral on the far right-hand side of the preceding display 
reduces to
$$
2 (b-a)^2 \left[
\frac{a}{(b-a)^2} \frac{\pi}{2 \sqrt{a} \sqrt{b}}
+ \frac{b}{b-a} \frac{\pi}{4 \sqrt{a}\sqrt{b^3} } 
-\frac{1}{(b-a)^2} \frac{\pi}{2}
\right]
\quad=\quad
\frac{\pi}{2\sqrt{a}\sqrt{b}} (\sqrt{b}-\sqrt{a})^2.
$$
Now note that $\sqrt{a} = 1-\sqrt{t}$ in case $t<1$,
while $\sqrt{a} = \sqrt{t}-1$ in case $t>1$.\sout{ With this, it is
elementary to verify} \textcolor{red}{This implies} that the expression on the far right-hand 
side of the preceding display reduces to $2 \pi \min\{1,t\} / |1-t|$, 
as required.
And in case $t=1$, we have $a=0$ and $b=4$, so that
the integrand on the far right-hand side of the second-to-last display 
reduces to\sout{ $( 1 - 1/(z^2+1))/ 16$} \textcolor{red}{$[ 1 - 1/(z^2+1)]/ 16$} and hence integrates to $\infty$.
\end{proof}

\begin{appxlem}
\label{mp2}
For the $n\times p$ matrix $V$ as in Section~\ref{s3}, write
$\lambda_1\geq \lambda_2\geq \dots \geq  \lambda_p$ for the ordered eigenvalues
of $V'V$.  If $n$ and $p$ are such that $n\to\infty$  and $p/n \to t$
for some $t\in [0,1]$, 
then
$$
\sum_{i=1}^p \frac{1}{\lambda_i} 
\quad\stackrel{\mbox{a.s.}}{\longrightarrow}\quad
\frac{t}{1-t},
$$
where the limit is to be interpreted as $\infty$ in the case where $t=1$
(and where the sum  on the left-hand side 
is to be interpreted as $\infty$ whenever $\lambda_p = 0$).
Moreover, we also have $\lambda_1/n \to (1+\sqrt{t})^2$ and $\lambda_p/n \to (1-\sqrt{t})^2$ almost surely.
\end{appxlem}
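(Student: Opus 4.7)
The plan is to combine two classical random matrix theorems---Marchenko--Pastur for the empirical spectral distribution and Bai--Yin for the extremal eigenvalues---with the explicit integral evaluated in Lemma~\ref{mp1}. Writing $\mu_i = \lambda_i/n$ for the eigenvalues of $V'V/n$ and $F_n$ for their empirical distribution, the Bai--Yin theorem under our fourth-moment assumption (see \cite{baisilverstein10}) yields $\mu_1 \to (1+\sqrt{t})^2$ and $\mu_p \to (1-\sqrt{t})^2$ almost surely, which is exactly the second assertion of the lemma.

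For the first assertion, the idea is to recognize
\[
\sum_{i=1}^p \frac{1}{\lambda_i} \;=\; \frac{p}{n}\int \frac{1}{x}\, dF_n(x)
\]
and to pass to the limit inside the integral. The Marchenko--Pastur theorem gives $F_n \Rightarrow F_t$ weakly almost surely, where $F_t$ has the MP density $(2\pi t x)^{-1}\sqrt{(b-x)(x-a)}$ on $[a,b]$ with $a=(1-\sqrt{t})^2$ and $b=(1+\sqrt{t})^2$. In the case $t<1$, Bai--Yin keeps the eigenvalue support bounded away from zero almost surely, so eventually all $\mu_i$ lie in a common compact subinterval of $(0,\infty)$ on which $1/x$ is bounded and continuous. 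Weak convergence then upgrades to $\int x^{-1}\, dF_n(x) \to \int x^{-1}\, dF_t(x)$ almost surely, and a short rearrangement brings the limiting integral into the form handled by Lemma~\ref{mp1}, giving value $1/(1-t)$. Multiplying by $p/n\to t$ produces $t/(1-t)$, as required.

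The boundary case $t=1$ is the main obstacle, since then $1/x$ ceases to be integrable against $F_1$ and the smallest eigenvalue is no longer bounded away from zero. The plan here is to prove divergence rather than convergence. If $\lambda_p=0$ the sum equals $\infty$ by convention and there is nothing to show; otherwise, for each truncation level $M>0$ the function $g_M(x)=\min\{1/x,M\}$ is bounded and continuous, so weak convergence yields $\int g_M\,dF_n \to \int g_M\,dF_1$ almost surely. Monotone convergence gives $\int g_M\,dF_1 \uparrow \int x^{-1}\,dF_1 = \infty$ as $M\to\infty$ (the divergence is visible from the $x^{-1/2}$ blow-up of the MP density at zero, or equivalently from Lemma~\ref{mp1} with $t=1$). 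Since $\int x^{-1}\,dF_n \geq \int g_M\,dF_n$, this forces $\liminf_n \int x^{-1}\,dF_n=\infty$ almost surely, and multiplying by $p/n\to 1$ closes the argument.

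In summary, the only real difficulty is the unboundedness of $1/x$ at the origin, which prevents a naive application of weak convergence. For $t<1$ the quantitative edge behaviour furnished by Bai--Yin resolves this, while for $t=1$ a truncation argument together with the divergence of the limit integral does the job. All remaining steps---the MP limit, the edge convergence, and the closed-form integral---are either quoted from \cite{baisilverstein10} or supplied by Lemma~\ref{mp1}.
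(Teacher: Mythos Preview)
Your argument for the main regime $0<t<1$ is essentially the paper's: both combine the Marchenko--Pastur law with Bai--Yin edge convergence to justify passing $\int x^{-1}\,dF_n$ to the limit, then invoke Lemma~\ref{mp1} for the closed form. The paper makes the truncation explicit by defining $g(x)=1/x$ for $x\geq a/2$ and $g(x)=2/a$ otherwise; your phrasing ``eventually all $\mu_i$ lie in a common compact subinterval of $(0,\infty)$'' amounts to the same thing.

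The two proofs diverge at the boundary cases. For $t=1$ the paper does \emph{not} truncate $1/x$; instead it embeds $V$ as a submatrix of a larger $n\times\tilde p$ matrix with $\tilde p/n\to\tilde t<1$, applies Cauchy's interlacing theorem to get $\sum 1/\lambda_i\geq \sum 1/\tilde\lambda_i\to\tilde t/(1-\tilde t)$, and lets $\tilde t\uparrow 1$. Your monotone-truncation argument via $g_M(x)=\min\{1/x,M\}$ is a genuinely different route and arguably more self-contained, since it needs only the Marchenko--Pastur law at $t=1$ and the divergence of $\int x^{-1}\,dF_1$, whereas the paper's interlacing device recycles the already-established case $0<t<1$.

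For $t=0$ you subsume this into the generic $t<1$ case, relying on Bai--Yin and Marchenko--Pastur to hold there. The paper does not do this: it again uses Cauchy interlacing, now embedding $V$ into a larger matrix with $\tilde p/n\to\tilde t>0$ and letting $\tilde t\downarrow 0$. The reason is that the theorems cited from \cite{baisilverstein10} (Theorems~3.6 and~5.11) are stated for $p/n\to t$ with $t>0$, so the degenerate case $t=0$ is not literally covered by the references. Your argument is morally correct---the limits $\mu_1,\mu_p\to 1$ and $F_n\Rightarrow\delta_1$ do hold---but as written it leans on a version of the cited results that goes slightly beyond their stated hypotheses. The interlacing trick sidesteps this cleanly; alternatively you could note that the $t=0$ edge behaviour follows from the $t>0$ case by the same monotonicity.
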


\begin{proof}
Consider first the case where $t$ satisfies $0 < t < 1$.
Write $\nu_1\geq \nu_2 \geq \dots \geq \nu_p$ for the ordered eigenvalues
of $V'V/n$, and note that $\nu_i = \lambda_i/n$, $1\leq i\leq p$.
Write $F_n$ for the empirical cumulative distribution function
(c.d.f.)\ 
of the $\nu_i$'s, i.e., for $x\in\R$,
$F_n(x)$ is the fraction of eigenvalues of $V'V/n$ that do not exceed $x$,
and note that $F_n$ is a random c.d.f.\ as it depends on $V'V/n$.
Under the maintained assumptions,
$F_n$ converges weakly to a non-random limit
$F$, except on a set of probability zero; cf. Theorem 3.6 
of~\cite{baisilverstein10}.
The limit c.d.f.\ $F$ corresponds to the so-called\sout{ Marchenko-Pastur} \textcolor{red}{Marchenko--Pastur} 
distribution, which is supported by the interval $[a,b]$ with 
$a = (1-\sqrt{t})^2$ and $b=(1+\sqrt{t})^2$, and which has 
a density given by $f(x) = \sqrt{(x-a)(b-x)}/( 2 \pi t x)$
for $a\leq x \leq b$.
For any bounded continuous function $g(\cdot)$, it follows 
that $\int g(x) F_n( \text{d} x) \to \int g(x) f(x) \text{d} x$ almost
surely in view of the Portmanteau Theorem.
And under the maintained assumptions, we also see that the smallest
and the largest
eigenvalue of $V'V/n$, i.e.,  $\nu_p = \lambda_p/n$ and
$\nu_1 = \lambda_1/n$, 
converge almost surely to $a$
and to $b$, respectively, whenever $0<t \leq 1$;
see Theorem~5.11 of~\cite{baisilverstein10}.
Recall that $a=(1-\sqrt{t})^2$ is positive, 
and define a function $g(\cdot)$ as
$g(x) = 1/x$ if $x\geq a/2$ and set $g(x) = 2/a$ otherwise.
Now first note that
$\int g(x) F_n( \text{d} x) \to \int g(x) f(x) \text{d} x$, except on a  probability
zero event, because $g(\cdot)$ is continuous and bounded.
Second,\sout{ note that we have} \textcolor{red}{we have that} $\int g(x) f(x) \text{d} x = \int (1/x) f(x) \text{d} x$ because 
$g(x) = 1/x$ on the support of $f(\cdot)$.
And third, observe that
$\int g(x) F_n( \text{d} x)$ and $\int (1/x) F_n( \text{d} x)$ converge to the same
limit, except on a probability zero event, because
$\nu_p$
converges to $a$ almost surely.
These three observations\sout{ entail} \textcolor{red}{imply} that
$$
\int \frac{1}{x} F_n( \text{d} x) \quad\stackrel{a.s.}{\longrightarrow }\quad
\int \frac{1}{x} f(x) \text{d} x.
$$
But the left-hand side in the preceding display can also be written
as $\frac{1}{p} \sum_{i=1}^p \frac{1}{\nu_i} = 
\frac{n}{p} \sum_{i=1}^p \frac{1}{\lambda_i}$; and the
right-hand side is equal to  $1/(1-t)$ in view of Lemma~\ref{mp1}.
It follows that $\sum_{i=1}^p 1/\lambda_i$ converges to
$ t/(1-t)$ almost surely, as required.

In the case where $t=0$, choose an arbitrary number
$\tilde{t}$ with  $0 < \tilde{t} < 1$, and choose numbers  
$\tilde{p} \geq p$ that go to infinity with $n$
such that $\tilde{p}/n \to \tilde{t}$ as $n \to\infty$.
Write $\tilde{V}$ for the $n\times \tilde{p}$ matrix
that forms the upper left block of the double array
$(V_{i,j})_{i\geq 1, j\geq 1}$,
and denote the eigenvalues of $\tilde{V}'\tilde{V}$ by
$\tilde{\lambda}_1 \geq \dots \geq \tilde{\lambda}_{\tilde{p}}$.
Because $V$ is a sub-matrix of $\tilde{V}$, Cauchy's interlacing theorem
entails that 
$\sum_{i=1}^p 1/\lambda_i \leq \sum_{i=1}^{\tilde{p}} 1/\tilde{\lambda}_i$,
and also 
that $\tilde{\lambda}_{\tilde{p}} \leq \lambda_p \leq \lambda_1
\leq \tilde{\lambda}_1$.
And from the case considered in the preceding paragraph, it follows that
$\sum_{i=1}^{\tilde{p}} 1/\tilde{\lambda}_i \to \tilde{t}/(1-\tilde{t})$,
that $\tilde{\lambda}_{\tilde{p}}/n \to (1-\sqrt{\tilde{t}})^2$,
and that $\tilde{\lambda}_1/n \to (1+\sqrt{\tilde{t}})^2$,
almost surely.
Taken together, we obtain that
$\limsup \sum_{i=1}^p 1/\lambda_i \leq \tilde{t}/(1-\tilde{t})$,
that $(1-\sqrt{\tilde{t}})^2 \leq \liminf \lambda_p/n$,
and that $\limsup \lambda_1/n \leq (1+\sqrt{\tilde{t}})^2$,
almost surely.
Letting $\tilde{t} \downarrow 0$  gives the desired result.

In the case where $t=1$, we already know that $\lambda_1/n \to 4$ and
$\lambda_p/n \to 0$ almost surely. Choose an arbitrary number 
$\tilde{t}$ with $0 < \tilde{t} < 1$, and let $\tilde{p} \leq p$
be such that $\tilde{p}/n \to \tilde{t}$.
Now repeat the argument in the preceding paragraph, with the role of
$p$ and $\tilde{p}$ exchanged, to conclude that
$\tilde{t}/(1-\tilde{t}) \leq \liminf \sum_{i=1}^p 1/\lambda_i$ almost surely. The result follows by letting $\tilde{t} \uparrow 1$.
\end{proof}

\begin{remark*}\normalfont
The lemma continues to hold if $\sum_{i=1}^p 1/\lambda_i$
is replaced by $\sum_{i: \lambda_i > 0} 1/\lambda_i$.
And the lemma can be adapted to cover also the case where 
$t \in (1,\infty]$.
\end{remark*}

Throughout the following, we use symbols like $\chi^2_k(\lambda)$
to denote a random variable that is chi-square distributed
with $k\geq 1$ degrees of freedom and non-centrality parameter $\lambda\geq 0$.
Note that $\chi^2_k(\lambda)$ has the same distribution as
the sum  of $k + 2 J_\lambda$ i.i.d.\ central chi-square distributed random
variables with one degree of freedom that are also independent of $J_\lambda$,
where $J_\lambda$ is Poisson with mean $\lambda/2$. In that sense,
the law of  $\chi^2_k(\lambda)$ can be viewed as a central
chi-square distribution with random degrees of freedom equal to 
$k + 2 J_\lambda$, and we will also denote this distribution by
$\chi^2_{k+2 J_\lambda}$. 
From this, it follows that $\chi^2_k(\lambda)$ is
stochastically larger than $\chi^2_k(0)$, whence
$\E[ (1/ \chi^2_k(\lambda))^m] \leq \E[(1/\chi^2_k(0))^m]$
for each $m\geq 1$ (where the expected values can be infinite). 
Also note that\sout{ $\E[(1/\chi^2_k(0))^m] = 1/((k-2)\cdots (k- 2 m))$}
\textcolor{red}{$\E[(1/\chi^2_k(0))^m] = \prod_{i=1}^m [1/(k-2i)]$} 
whenever $k$ and $m$ are positive integers satisfying $k > 2 m$.

\begin{appxlem} \label{chisq}
Fix an integer $m\geq 1$, and consider random variables $\chi^2_k(\lambda)$
with $k > 2 m$. Then
$$
\E\left[  \left( \frac{ k + \lambda}{ \chi^2_k(\lambda)}
\right)^m
\right]
\quad \longrightarrow \quad 1
$$
as $k+\lambda \to \infty$.
\end{appxlem}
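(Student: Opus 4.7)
My plan is to exploit the Poisson mixture representation recalled in the paragraph preceding the statement: $\chi^2_k(\lambda)$ has the same law as $\chi^2_{k+2J_\lambda}$ with $J_\lambda\sim\mathrm{Poisson}(\lambda/2)$. Conditioning on $J_\lambda$ and invoking the central-chi-square moment formula $\E[\chi^2_{k+2J_\lambda}^{-m}\mid J_\lambda]=\prod_{i=1}^m(k+2J_\lambda-2i)^{-1}$, which is valid because $k>2m$ forces every factor to be positive a.s., one arrives at
\[
\E\!\left[\!\left(\frac{k+\lambda}{\chi^2_k(\lambda)}\right)^{\!m}\right]\;=\;\E\!\left[P_{k,\lambda}\right],\qquad P_{k,\lambda}\;:=\;\prod_{i=1}^m\frac{k+\lambda}{k+2J_\lambda-2i}.
\]
It therefore suffices to show $\E[P_{k,\lambda}]\to 1$ as $k+\lambda\to\infty$; the plan is to do so by Vitali's convergence theorem, i.e., convergence in probability plus uniform integrability.

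Convergence in probability is easy: rewriting each factor as $[1+(2J_\lambda-\lambda-2i)/(k+\lambda)]^{-1}$ and using $\Var((2J_\lambda-\lambda)/(k+\lambda))=2\lambda/(k+\lambda)^2\leq 2/(k+\lambda)\to 0$, Chebyshev gives $(2J_\lambda-\lambda)/(k+\lambda)\to 0$ in probability; the deterministic correction $2i/(k+\lambda)$ is negligible, so each factor and hence the finite product $P_{k,\lambda}$ tends to $1$ in probability.

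For uniform integrability I split into regimes. If $\lambda$ stays bounded, $k\to\infty$ is forced, and the deterministic bound $P_{k,\lambda}\leq[(k+\lambda)/(k-2m)]^m$ is uniform in $\omega$ and tends to $1$, so bounded convergence suffices. If $\lambda\to\infty$, decompose on the event $A_\lambda=\{J_\lambda\geq\lambda/4\}$: on $A_\lambda$ every denominator satisfies $k+2J_\lambda-2i\geq k+\lambda/2-2m\geq(k+\lambda)/4$ for $k+\lambda$ large, giving $P_{k,\lambda}\leq 4^m$; on $A_\lambda^c$ a standard Chernoff estimate for the Poisson yields $\P(A_\lambda^c)\leq e^{-c\lambda}$ for some $c>0$, while $P_{k,\lambda}\leq[(k+\lambda)/(k-2m)]^m$ grows only polynomially in $k+\lambda$, so that the combined contribution is $o(1)$.

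The main obstacle is the uniform integrability. The naive first attempt, based on the stochastic dominance $\chi^2_k(\lambda)\succeq_{st}\chi^2_k(0)$ used elsewhere in this appendix, only yields $\E[((k+\lambda)/\chi^2_k(\lambda))^m]\leq[(k+\lambda)/k]^m(1+o(1))$, which blows up in the regime $\lambda\gg k$. One must instead use the finer concentration of $\chi^2_k(\lambda)$ around its mean $k+\lambda$ (equivalently, of $J_\lambda$ around $\lambda/2$), which is exactly what the Poisson-mixture decomposition enables.
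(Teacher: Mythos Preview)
Your proof is correct and follows essentially the same route as the paper's: both establish convergence in probability via Chebyshev and then upgrade to $L^1$ via uniform integrability, using the Poisson mixture representation, the split on $\{J_\lambda \gtrless \lambda/4\}$, and Chernoff's Poisson tail bound. Your device of integrating out the central chi-square immediately to reduce everything to the single Poisson variable $J_\lambda$ streamlines the bookkeeping somewhat (your ``$\lambda$ bounded'' case becomes straight bounded convergence, whereas the paper computes second moments there), but the essential ideas coincide.
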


\begin{proof}
Fix $m\geq 1$.  The mean and the variance of $\chi^2_k(\lambda)$ are given
by $k + \lambda$ and $2(k + 2\lambda)$, respectively.
Using this fact and Chebyshev's inequality, we see that
$\chi^2_k(\lambda)/(k+\lambda)\to 1 $, and also that\sout{ $( (k+\lambda) / \chi^2_k(\lambda) )^{m} \to 1$}
\textcolor{red}{$[ (k+\lambda) / \chi^2_k(\lambda) ]^{m} \to 1$},
in probability as $k+\lambda\to\infty$. It remains to show, for 
integers $k_a$ and reals $\lambda_a$,  $a\geq 1$,
satisfying $k_a > 2 m$, $\lambda_a\geq 0$,  
and $k_a + \lambda_a \to \infty$ as $a\to \infty$,
that the random variables\sout{ $( (k_a+\lambda_a) / \chi^2_{k_a}(\lambda_a) )^{m}$} \textcolor{red}{$[ (k_a+\lambda_a) / \chi^2_{k_a}(\lambda_a) ]^{m}$}
are uniformly integrable. In other words, for each fixed $\epsilon>0$,
we need to find a constant $M$ so that
\begin{equation}
\label{chisq.1}
\E\left[
	\left(
	\frac{k_a+\lambda_a}{\chi^2_{k_a}(\lambda_a)}
	\right)^m
	\left\{
	\left(
	\frac{k_a+\lambda_a}{\chi^2_{k_a}(\lambda_a)}
	\right)^m
	\,>\,M
	\right\}
\right]\quad < \quad\epsilon
\end{equation}
holds for each $a$,
where we use symbols like $\{ A \}$ to denote the indicator 
function of the event $A$. Set\sout{ $U_a = ((k_a + \lambda_a)/\chi^2_{k_a}(\lambda_a))^m$} \textcolor{red}{$U_a = [(k_a + \lambda_a)/\chi^2_{k_a}(\lambda_a) ]^m$},
note that the $U_a$'s are all integrable because $k_a > 2 m$,
and recall that $\chi^2_{k_a}(\lambda_a)$ is distributed
as $\chi^2_{k_a + 2 J_a}(0)$ for an independent Poisson
random  variable $J_a$ with mean $\lambda_a/2$.
For later use, we also recall the Chernoff bound
$\P(J_a < \lambda_a/4) \leq (e/2)^{-\lambda_a/4}$.
To derive \eqref{chisq.1}, i.e., to show uniform integrability of the
$U_a$'s, we may assume that either $k_a \to\infty$ or $\lambda_a \to\infty$ 
as $a\to\infty$, by switching to subsequences if necessary
(because $k_a+\lambda_a\to\infty$).

Assume that $k_a\to\infty$. Here, we first consider the (finitely many)
$a$'s for which  $k_a \leq 4 m$. 
For these, we can find a constant $M_1$ so that \eqref{chisq.1} holds
whenever $M\geq M_1$ because the $U_a$'s are integrable.
It remains to consider the $a$'s with $k_a > 4 m$.
For these $a$'s, we derive uniform integrability 
by showing that the second moments of the $U_a$'s are bounded.\sout{ Indeed,}
\textcolor{red}{Using the law of iterated expectation}, $\E[ U_a^2]$ is given by the sum of
\begin{align}
&\E\left[
\prod_{i=1}^{2 m} 
\left(
\frac{ k_a+\lambda_a}{k_a-2 i+ 2 J_{\lambda_a}}
\right)
\left\{ J_{\lambda_a}  \geq  \lambda_a/4 \right\}
\right] \label{chisq.2} \qquad \xout{\text{and}} \\
\intertext{\textcolor{red}{and}}
&\E\left[
\prod_{i=1}^{2 m} 
\left(
\frac{ k_a+\lambda_a}{k_a-2 i+ 2 J_{\lambda_a}}
\right)
\left\{ J_{\lambda_a}  <  \lambda_a/4 \right\}
\right]. \label{chisq.3}
\end{align}
The expression in \eqref{chisq.2} is bounded by
$(4 m + 3)^{2 m}$ because  the $i$-th fraction in the integrand of
\eqref{chisq.2} is bounded by
$(k_a+\lambda_a)/(k_a-2 i + \lambda_a/2) \leq 
 (k_a+\lambda_a)/(k_a-4 m + \lambda_a/2) \leq k_a/(k_a-4m) + 2 \leq
 4 m + 1 + 2 $
(the first inequality holds in view of $J_{\lambda_a} \geq \lambda_a/4$,
and the last inequality holds because $k_a > 4 m$).
To bound the expression in \eqref{chisq.3},\sout{ we first} note that
the $i$-th  fraction in the integrand in \eqref{chisq.3} is bounded
by $k_a / (k_a - 2 i) + \lambda_a \leq 4m+1 + \lambda_a$.
Using this and the Chernoff's Poisson tail bound mentioned earlier,
it follows that \eqref{chisq.3} is bounded by
$(4m+1+\lambda_a)^{2 m} (e/2)^{-\lambda_a/4}$.
This upper bound is a continuous function of $\lambda_a\geq 0$ that
converges to zero as $\lambda_a\to\infty$. Hence, the upper bound
is itself bounded by a constant, uniformly in $\lambda_a\geq 0$,
as required.

Now assume that $\lambda_a \to\infty$. 
We decompose the expression on the left-hand side of \eqref{chisq.1}\sout{,
i.e., $\E[ U_a \{U_a > M\}]$,} into the sum of
$\E[ U_a \{U_a > M\}\{ J_{\lambda_a} < \lambda_a/4\}]$
and 
$\E[ U_a \{U_a > M\}\{ J_{\lambda_a}\geq \lambda_a/4\}]$, and find
$M$ so that each of these two terms is bounded by $\epsilon/2$.
To this end, note that $\chi^2_{k_a}(\lambda_a)$ is 
stochastically larger than $\chi^2_{k_a}(0)$, so that
$\E[ U_a \{U_a > M\}\{ J_{\lambda_a} < \lambda_a/4\}]$ can be bounded by
\begin{align*}
\E\left[
\left(
	\frac{ k_a + \lambda_a}{ \chi^2_{k_a}(0)}
\right)^m
\right]
\; \P
\left(
	J_{\lambda_a} < \lambda_a/4
\right)
\quad\leq\quad (2 m+1+\lambda_a)^m (e/2)^{-\lambda_a/4}.
\end{align*}
In the preceding display,
the inequality is derived by writing the expected value
as $\prod_{i=1}^m (k_a + \lambda_a)/(k_a - 2 i)$, 
by noting that $k_a/(k_a-2 i) \leq 2 m + 1$ because $k_a > 2 m$,
and upon using Chernoff's tail bound for the Poisson.
Since $\lambda_a \to\infty$ here, the upper bound in the preceding display
is smaller than $\epsilon/2$ for sufficiently large  $\lambda_a$'s, e.g.,
$\lambda_a > \lambda_\ast$. And for the (finitely many) $a$'s for which
$\lambda_a \leq \lambda_\ast$, we can find a constant $M_2$
so that 
$\E[ U_a \{U_a > M\}\{ J_{\lambda_a} < \lambda_a/4\}]$ is less than
$\epsilon/2$ whenever $M\geq M_2$.
Lastly, 
$\E[ U_a \{U_a > M\}\{ J_{\lambda_a} \geq  \lambda_a/4\}]$ is bounded by
\begin{align*}
&
\E\left[
\left( \frac{k_a + \lambda_a}{ \chi^2_{k_a + \lceil \lambda_a/2\rceil}(0)}
\right)^m
\left\{
\left( \frac{k_a + \lambda_a}{ \chi^2_{k_a + \lceil \lambda_a/2\rceil}(0)}
\right)^m
>M
\right\}
\right]
\\
&
\leq \quad
2^m \E\left[
\left( \frac{k_a + \lceil \lambda_a/2\rceil }{ 
	\chi^2_{k_a + \lceil \lambda_a/2\rceil}(0)}
\right)^m
\left\{
\left( \frac{k_a + \lceil \lambda_a/2 \rceil}{ 
	\chi^2_{k_a + \lceil \lambda_a/2\rceil}(0)}
\right)^m
>2^{-m} M
\right\}
\right],
\end{align*}
where $\lceil x \rceil$ denotes the smallest integer not smaller than $x$,
and where the inequality is based on the observation that
$k_a + \lambda_a = 2 ( k_a/2 + \lambda_a/2) < 2 
(k_a + \lceil \lambda_a/2\rceil)$. Set 
$\tilde{k}_a = k_a + \lceil \lambda_a/2\rceil$ and set
$\tilde{\lambda}_a = 0$. To find an $M_3\geq M_2$
such that the expression on the right-hand side of the preceding display
is less than $\epsilon/2$ whenever $M\geq M_3$, it suffices to show that
the random variables\sout{ $( (\tilde{k}_a +\tilde{\lambda}_a) /  
\chi^2_{\tilde{k}_a}(\tilde{\lambda}_a) )^m$}
\textcolor{red}{$[ (\tilde{k}_a +\tilde{\lambda}_a) /  
\chi^2_{\tilde{k}_a}(\tilde{\lambda}_a) ]^m$}
are uniformly integrable. Since $\tilde{k}_a \to \infty$ as $a\to\infty$,
this has already been established  in the second paragraph of the proof.
\end{proof}

\begin{appxlem}
\label{quadrat1}
Consider the $n\times p$ matrix $V$ as in Section~\ref{s3}, and
let $w$ be a unit-vector in $\R^p$ ($n\geq p \geq 3$).  
Then $\E[ wV'V w/n] = 1$ and $\Var[ w'V'Vw/n] \to 0$ as $n\to\infty$,
irrespective of the behavior of $p$; in particular, we have
$w' V'V w/n \to 1$ in probability.
\end{appxlem}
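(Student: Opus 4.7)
The plan is to write $w'V'Vw = \sum_{i=1}^n (V_{i\cdot}\,w)^2$, where $V_{i\cdot}$ denotes the $i$-th row of $V$, and to exploit that the entries $V_{i,j}$ are independent with mean zero, variance one, and finite fourth moment as assumed in Section~\ref{s3}. For the mean, $V_{i\cdot}\,w = \sum_{j=1}^p V_{i,j} w_j$ is a linear combination of independent mean-zero unit-variance random variables, so $\E[(V_{i\cdot}\,w)^2] = \sum_j w_j^2 = 1$, and hence $\E[w'V'Vw/n]=1$ immediately.

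For the variance I would use independence of the rows to reduce to a single-row computation:
\begin{equation*}
\Var[w'V'Vw/n] \;=\; \frac{1}{n^2}\sum_{i=1}^n \Var[(V_{i\cdot}\,w)^2] \;=\; \frac{1}{n}\,\Var[(V_{1\cdot}\,w)^2] \;\leq\; \frac{1}{n}\,\E[(V_{1\cdot}\,w)^4],
\end{equation*}
so the whole lemma reduces to bounding $\E[(V_{1\cdot}\,w)^4]$ by a constant that is uniform in $p$ and in the unit vector $w$ (the uniformity in $p$ is what the lemma's phrase ``irrespective of the behavior of $p$'' requires).

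The one step that needs care is the moment calculation. Expanding the fourth power and using independence of the entries within a row, the only tuples $(j_1,j_2,j_3,j_4)$ that contribute are those with all four indices equal or those whose indices form two pairs; there are $3$ such pairings. Writing $\mu_4 := \E[V_{1,1}^4]$, this should yield
\begin{equation*}
\E[(V_{1\cdot}\,w)^4] \;=\; \mu_4 \sum_{j=1}^p w_j^4 \;+\; 3 \sum_{j\neq k} w_j^2 w_k^2 \;=\; 3 + (\mu_4 - 3)\sum_{j=1}^p w_j^4 \;\leq\; \max(3,\mu_4),
\end{equation*}
where the last inequality uses $\sum_j w_j^4 \leq (\sum_j w_j^2)^2 = 1$. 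Thus $\Var[w'V'Vw/n] = O(1/n) \to 0$ uniformly in $p$ and in unit vectors $w$, and convergence in probability then follows from Chebyshev's inequality. I do not anticipate any real obstacle: the finite-fourth-moment hypothesis imposed in Section~\ref{s3} is precisely what the combinatorial moment accounting requires, and the resulting bound is manifestly $p$-free because $\|w\|_4^4 \leq \|w\|_2^4 = 1$ for any unit vector.
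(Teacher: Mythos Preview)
Your proof is correct and follows essentially the same approach as the paper: both write $w'V'Vw/n$ as an average of the $n$ i.i.d.\ row-wise squares $(V_{i\cdot}w)^2$, compute the fourth moment $\E[(V_{1\cdot}w)^4]$ via the standard combinatorial expansion, and bound it uniformly in $p$ using $\sum_j w_j^4 \leq 1$. The only cosmetic difference is that the paper subtracts the squared mean to get the exact variance $(\mu_4-1)\sum_j w_j^4 + 2\sum_{j\neq k}w_j^2 w_k^2 \leq \mu_4+1$, whereas you bound the variance by the raw second moment and obtain $\max(3,\mu_4)$; either bound gives $\Var[w'V'Vw/n]=O(1/n)$.
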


\begin{proof}
Setting $W^{(n)}_i = (\sum_{j=1}^p V_{i,j} w_j)^2$, we see that
$w'V'Vw/n$ can be written as $(1/n) \sum_{i=1}^n W^{(n)}_i$, i.e.,
as the average of $n$ i.i.d.\ random variables which have mean 
$\sum_{j=1}^p w_j^2 = 1$. And the variance of $W^{(n)}_i$ is given by
\begin{align*}
&\sum_{a,b,c,d=1}^p 
w_a w_b w_c w_d
\E[V_{i,a} V_{i,b} V_{i,c} V_{i,d}]
- 1
\;\;=\;\; \E[V_{1,1}^4]\sum_{a=1}^p  w_a^4 + 
	3  \sum_{\stackrel{a,b=1}{a\neq b}}^p w_a^2w_b^2 - 1
\\
&\;\;=\;\; (\E[V_{1,1}^4] -1) \sum_{a=1}^p w_a^4 + 
	2 \sum_{\stackrel{a,b=1}{a\neq b}}^p w_a^2 w_b^2
\;\;\leq \;\; \E[V_{1,1}^4]+1.
\end{align*}
In the preceding display, the first equality is based  on the fact
that $\E[V_{i,a} V_{i,b}, V_{i,c}, V_{i,d}] = 0$ whenever one
of the indices  $a,b,c,d$ differs from the others, while the
second equality and the inequality are derived from the facts
that $\sum_{j=1}^p w_j^2 = 1$ and that $1\leq \E[V_{1,1}^4]$.
We hence can bound $\Var[ w'V'Vw/n]$ by  $(\E[V_{1,1}^4]+1)/n$.
\end{proof}

\begin{appxlem}
\label{quadrat2}
Assume that Theorem~\ref{t1} applies.
If $t+\delta^2 > 0$, then
$$ \xout{
\frac{p}{p + \beta'X'X\beta}\quad\longrightarrow \quad \frac{t}{t+\delta^2}
\qquad\text{ and }\qquad
\beta'\Sigma\beta \frac{p}{p + \beta'X'X\beta} 
\quad \longrightarrow \quad \frac{t \delta^2}{t + \delta^2}
}
$$
$$
\textcolor{red}{\frac{p}{p + \beta'X'X\beta}\quad\longrightarrow \quad \frac{t}{t+\delta^2}, \qquad
\beta'\Sigma\beta \frac{p}{p + \beta'X'X\beta} 
\quad \longrightarrow \quad \frac{t \delta^2}{t + \delta^2},}
$$
in probability as $n\to\infty$; in case $\delta^2=\infty$, the two limits
are to be interpreted as $0$ and as $t$, respectively.
These statements continue to hold if $p$ is replaced by $p+k$ in the
numerators of the preceding display, for some fixed $k\in \N$.
\end{appxlem}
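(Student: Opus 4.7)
The plan is to reduce both claimed limits to Lemma~\ref{quadrat1} via the factorization $X = V\Sigma^{1/2}$, which yields
$$\beta' X' X \beta \;=\; (\Sigma^{1/2}\beta)'V'V(\Sigma^{1/2}\beta).$$
For indices $n$ with $\beta'\Sigma\beta > 0$ I would introduce the unit vector $w = \Sigma^{1/2}\beta/\sqrt{\beta'\Sigma\beta} \in \R^p$, so that
$$\frac{\beta' X' X \beta}{n} \;=\; (\beta'\Sigma\beta)\,\frac{w'V'V w}{n}.$$
Lemma~\ref{quadrat1} gives $w'V'Vw/n \to 1$ in probability, and this conclusion transfers to our sequence because the variance bound in that lemma depends neither on $w$ nor on $p$. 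The (possibly empty) subsequence on which $\beta = 0$ is deterministic and gives $\beta' X' X \beta = 0$, so it causes no trouble.

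For the first ratio, I would write
$$\frac{p}{p + \beta' X' X \beta} \;=\; \frac{p/n}{p/n \,+\, (\beta'\Sigma\beta)(w'V'V w/n)}.$$
When $\delta^2 < \infty$, Slutsky's theorem immediately yields the limit $t/(t+\delta^2)$, which is well-defined since $t+\delta^2 > 0$. When $\delta^2 = \infty$, I would first divide numerator and denominator by $\beta'\Sigma\beta$ to obtain numerator $\to 0$ and denominator $\to 0 + 1 = 1$, giving the stated limit $0$.

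The second ratio is handled analogously: the same rewriting
$$(\beta'\Sigma\beta)\,\frac{p}{p + \beta' X' X \beta} \;=\; \frac{(\beta'\Sigma\beta)(p/n)}{p/n \,+\, (\beta'\Sigma\beta)(w'V'V w/n)}$$
gives $t\delta^2/(t+\delta^2)$ directly via Slutsky when $\delta^2$ is finite, and dividing through by $\beta'\Sigma\beta$ handles $\delta^2 = \infty$ with limit $t/(0+1) = t$, matching the stated interpretation. Replacing $p$ by $p+k$ in the numerator for any fixed $k$ is harmless, since $(p+k)/n \to t$ and the same Slutsky arguments apply verbatim. I do not expect a serious obstacle here; the only delicate point is that Lemma~\ref{quadrat1} must apply along a sequence in which the unit vector $w = w_n$ and its ambient dimension $p_n$ both vary with $n$, but this is already built into that lemma (the variance bound $(\E[V_{1,1}^4]+1)/n$ is independent of both).
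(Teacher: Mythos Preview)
Your proposal is correct and follows essentially the same route as the paper: define the unit vector $w = \Sigma^{1/2}\beta/(\beta'\Sigma\beta)^{1/2}$, use Lemma~\ref{quadrat1} to get $w'V'Vw/n \to 1$, and conclude via Slutsky (the paper says ``continuous mapping theorem''), treating $\delta^2=\infty$ by dividing through by $\beta'\Sigma\beta$. Your explicit remark that the variance bound in Lemma~\ref{quadrat1} is uniform in $w$ and $p$ is a nice addition that the paper leaves implicit.
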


\begin{proof}
Set $w = \Sigma^{1/2} \beta/(\beta'\Sigma\beta)^{1/2}$, 
note that $||w||=1$, and that
$\beta'X'X\beta/n$ can be written as 
$( \beta'\Sigma\beta)\,  w'V'V w /n$. Using Lemma~\ref{quadrat1},
it follows that $\beta'X'X\beta/n \to \delta^2$ in probability.
Because $\beta'X'X\beta/n$ and $\beta'\Sigma\beta$ both converge
to $\delta^2$ (the former in probability), and because $p/n\to t$,
the continuous mapping theorem gives both limits
in case $\delta^2<\infty$, and the first limit in case $\delta^2=\infty$.
In the remaining case where $\delta^2=\infty$, write
the quantity of interest as\sout{ $(p/n) / ( (p/n) / \beta'\Sigma\beta + w'V'Vw / n)$}
\textcolor{red}{$(p/n) / [ (p/n) / \beta'\Sigma\beta + w'V'Vw / n]$},
which is easily seen to converge to $t$, as claimed.
The last statement is trivial because $p/n$ and $(p+k)/n$ converge
to the same limit.
\end{proof}

\begin{proof}[\bf Proof of Theorem~\ref{t1}]
For the maximum-likelihood estimator, we have
$\rho_2(\hat{\beta}_{ML},\beta,X) = \trace(\Sigma(X'X)^{-1})
= \trace((V'V)^{-1})$ almost surely,
so that $\rho_2(\hat{\beta}_{ML},\beta,X) 
\to t/(1-t)$ in probability by Lemma~\ref{mp2}.
For $\hat{\beta}(c)$, 
note that $\rho_2(\hat{\beta}{(c)}, \beta,X)$ is given by
\eqref{eq:oospe.js} almost surely (where the expected values are to be
understood as conditional on $X$, \textcolor{red}{denoted by $\E[ \cdot \Vert X]$ in the following)};
using Proposition~\ref{propa2} \textcolor{red}{conditional on $X$}, we can thus write
$\rho_2(\hat{\beta}{(c)}, \beta,X)$ also as
\begin{align}
\label{pt1.1}
\begin{split}
&\trace((V'V)^{-1})   \left( 1 
- 2 c 
  \E\left[ \left.\frac{ p  }{\chi^2_p(\beta'X'X\beta)}
  	\right\| X \right]
+ (c^2 + 4 c /p)  
  \E\left[ \left. \frac{p^2}{(\chi^2_{p+2}(\beta'X'X\beta))^2} 
  \right\|X  \right]
  \right)
\\ & \quad
+ (c^2 + 4 c /p) 
  \E\left[ \left. \frac{p^2\beta'\Sigma\beta}{(\chi^2_{p+4}(\beta'X'X\beta))^2} 
  \right\|X  \right]
\end{split}
\end{align}
almost surely\sout{ (note that Proposition~\ref{propa2} assumes that $X$ is non-random,
so we use this result here conditional on $X$)}. 
Our first goal is to show that\sout{ $\sup_{0\leq c\leq C} | \rho_2(\hat{\beta}{(c)},\beta,X) - r(\delta^2,c,t)|$
converges to zero in probability.}\textcolor{red}{
$\sup_{0\leq c\leq C} | \rho_2(\hat{\beta}{(c)},\beta,X) - r(\delta^2,c,t)|=o_P(1)$.}

In the case where $t+\delta^2 = 0$, note that $r(0, c, 0) = 0$,
so that 
$\sup_{0\leq c\leq C} | \rho_2(\hat{\beta}{(c)},\beta,X) - r(\delta^2,c,t)|$
is bounded from above by
$$
\trace((V'V)^{-1}) \left(
	1 + 2 C \E\left[\frac{p}{\chi^2_p(0)}\right]
	  + (C^2+4C/p) \E\left[ \frac{p^2}{(\chi^2_{p+2}(0))^2}\right]\right)
	  + (C^2+4C/p) \beta'\Sigma\beta 
	  	\E\left[ \frac{p^2}{(\chi^2_{p+4}(0))^2}\right]
$$
because $\chi^2_{p+k}(\beta'X'X \beta)$ is stochastically
larger than $\chi^2_p(0)$ for each $k\geq 0$.\sout{
Moreover, for each $p\geq 3$, we have
$\E[p/\chi^2_p(0)] = p/(p-2) \leq 3$
and $\E[p^2/(\chi^2_{p+4}(0))^2] 
\leq \E[p^2/(\chi^2_{p+2}(0))^2] 
= p^2/(p(p-2)) \leq 3$.} Because both $\trace((V'V)^{-1})$ and $\beta'\Sigma\beta$
converge to zero (the former in probability) \textcolor{red}{and the expected values are bounded by 3 for each $p \geq 3$}, it follows that
the expression in the preceding display converges to\sout{ 0} \textcolor{red}{zero} in probability.

In the case where $t+\delta^2>0$, we\sout{ first} use the formula for
$\rho_2(\hat{\beta}{(c)},\beta,X)$ in \eqref{pt1.1},
the formula for $r(\delta^2,c,t)$, and the triangle inequality to bound
$\sup_{0\leq c\leq C} | \rho_2(\hat{\beta}{(c)},\beta,X) - r(\delta^2,c,t)|$
from above by
\begin{align}
\nonumber
\begin{split}
&\Bigg| \trace((V'V)^{-1} - \frac{t}{1-t}\Bigg|
\;+\;
2 C \Bigg| \trace((V'V)^{-1} 
  \E\left[ \left.\frac{ p  }{\chi^2_p(\beta'X'X\beta)} \right\| X \right]
  - \frac{t^2}{(1-t)(t+\delta^2)} 
\Bigg|
\\ &
\;+\; C^2 \Bigg|
	\trace((V'V)^{-1} 
  \E\left[ \left. \frac{p^2}{(\chi^2_{p+2}(\beta'X'X\beta))^2} 
  \right\|X  \right]
  - \frac{t^3}{(1-t)(t+\delta^2)^2} 
\Bigg|
\\&
\;+\;
C^2 \Bigg|
  \E\left[ \left. \frac{p^2\beta'\Sigma\beta}{(\chi^2_{p+4}(\beta'X'X\beta))^2} 
  \right\|X  \right]
  - \frac{t^2\delta^2}{(t+\delta^2)^2}
\Bigg|
\\&
\;+\; 4 (C/p)  \Bigg(
  \trace((V'V)^{-1}) 
  \E\left[ \left. \frac{p^2}{(\chi^2_{p+2}(\beta'X'X\beta))^2} 
  \right\|X  \right]
  +
  \beta'\Sigma\beta
  \E\left[ \left. \frac{p^2}{(\chi^2_{p+4}(\beta'X'X\beta))^2} 
  \right\|X  \right]
\Bigg).
\end{split}
\end{align}
\sout{The expression in the preceding display is the sum of five terms,
where we already know that the first term converges to zero in probability.}
\textcolor{red}{We already know that the first term of the preceding display converges to zero in probability.}
For the remaining four terms, we note that
either $p\to\infty$ (in case $t>0$),
or $\beta'X'X\beta \to \infty$
in probability (in case $\delta^2 > 0$ in view of Lemma~\ref{quadrat1} 
because $n \beta'X'X\beta/n$ can  be written as
$n \beta'\Sigma\beta (w'V'Vw/n)$, where the unit-vector $w$
is given by $\Sigma^{1/2}\beta/ (\beta'\Sigma\beta)^{1/2}$).
Using the assumption that $p\geq 3$,\sout{ it is easy to see that}
Lemma~\ref{chisq} can be used to deal with the expected values
in the preceding display.
Together with the fact that $\trace((V'V)^{-1}) \to t/(1-t)$ in probability, 
this shows
that the sum of the four remaining terms converges to the same limit as
\begin{align}
\nonumber
\begin{split}
& 2 C \frac{t}{1-t} \Bigg| 
	\frac{p}{p+\beta'X'X\beta}
  - \frac{t}{t+\delta^2} 
\Bigg|
\;+\; C^2 \frac{t}{1-t} \Bigg|
  \frac{p^2}{(p+2+\beta'X'X\beta)^2} 
  - \frac{t^2}{(t+\delta^2)^2} 
\Bigg|
\\&
\;+\;
C^2 \Bigg|
  \beta'\Sigma\beta\frac{p^2}{(p+4+\beta'X'X\beta)^2} 
  - \frac{t^2\delta^2}{(t+\delta^2)^2}
\Bigg|
\\&
\;+\; 4 (C/p)  \Bigg(\frac{t}{1-t}
  \frac{p^2}{(p+2+\beta'X'X\beta)^2} 
  +
  \beta'\Sigma\beta \frac{p^2}{(p+4+\beta'X'X\beta)^2} 
\Bigg).
\end{split}
\end{align}
Lemma~\ref{quadrat2} now entails that
the expressions in the preceding display converge to zero in probability.
(For the last term in the preceding display, we either have $p\to\infty$
if $t>0$, or $t=0$.)

Our second goal is to show that $\sup_{0\leq c \leq C} 
|r(\hat{\delta}^2,c,p/n) - r(\delta^2,c,t) |$ also 
converges to zero in probability. \sout{To this end, we} \textcolor{red}{We} can bound the
supremum in question by
\begin{align*}
& \Bigg| 	\frac{p/n}{1-p/n} - \frac{t}{1-t}
\Bigg|
\;+\;
2 C \Bigg|
	\frac{(p/n)^2}{(1-p/n)(p/n + \hat{\delta}^2)} -
	\frac{t^2}{(1-t)(t+\delta^2)}
\Bigg|
\\
&\;+\; C^2  \Bigg|
	\frac{(p/n)^3}{(1-p/n)(p/n+\hat{\delta}^2)^2} -
	\frac{t^3}{(1-t)(t+\delta^2)^2}
\Bigg|
\;+\; C^2 \Bigg|
	\hat{\delta}^2 \frac{ (p/n)^2}{(p/n+\hat{\delta}^2)^2} -
	\delta^2 \frac{t^2}{(t+\delta^2)^2}
\Bigg|.
\end{align*}
The expression in the preceding display is the sum of four terms,
where each term is of the form $| f(p/n, \hat{\delta}^2) - f(t,\delta^2)|$
for some function $f(\cdot,\cdot)$ which is
continuous on $[0,1)\times [0,\infty]$.
Since $p/n\to t$ by assumption and $\hat{\delta}^2 \to\delta^2$ in
probability (as is easy to see), the continuous mapping theorem entails
that each of the four terms in the preceding display converges to zero
in probability.
\end{proof}

\begin{proof}[\bf Proof of Theorem~\ref{t2}]
On the almost-sure event where\sout{ $X'X>0$} \textcolor{red}{$X'X$ is invertible}, 
$\rho_2(\hat{\beta}{(c)},\beta,X)$ is given by
the formula \eqref{pt1.1}. For\sout{ $X'X>0$} \textcolor{red}{invertible $X'X$} and
for $\beta$ such
that $\beta' (X'X/n)\beta = d^2$, 
the expression in  \eqref{pt1.1} depends on $\beta$ only 
through $\beta'\Sigma\beta$ which is at most 
$d^2 / \lambda_p(V'V/n)$, where 
$\lambda_p(\dots)$ denotes the smallest eigenvalue of the indicated
matrix.
[Indeed, we have $d^2 = \| (V'V/n)^{1/2} \Sigma^{1/2}\beta\|^2$
and $\beta'\Sigma\beta = \beta' \Sigma^{1/2} (V'V/n)^{1/2} 
\,[ (V'V/n)^{-1}]\, (V'V/n)^{1/2} \Sigma^{1/2}\beta \leq d^2 / 
\lambda_p(V'V/n)$.]
It follows that $\sup_{\beta\in \R^p}
\rho_2(\hat{\beta}{(c)},\beta,X)$ can almost surely be written as
the supremum of
\begin{align}
\nonumber 
\begin{split}
& \trace((V'V)^{-1})   \left( 1 
- 2 c 
  \E\left[ \frac{ p  }{\chi^2_p(n d^2)}  \right]
+ (c^2 + 4 c /p)  
  \E\left[ \frac{p^2}{(\chi^2_{p+2}(n d^2))^2} 
    \right]
  \right)
+ \frac{c^2 + 4 c /p}{ \lambda_p(V'V/n)}
  \E\left[ \frac{p^2 d^2  }{(\chi^2_{p+4}(n d^2))^2} 
    \right] 
\end{split}
\end{align}
over $d^2\geq 0$ or, equivalently, over $d\geq 0$.
Write $R_\ast(d^2,c,n,p)$ for the random variable in the preceding display.
The claim will follow if we can show that
$$
\sup_{0\leq c\leq C} \sup_{d\geq 0} \left|
	R_\ast(d^2,c,n,p) - R(d^2,c,t)
\right|
$$
converges to zero in probability.
Now use the triangle inequality to bound the expression in the preceding
display by the supremum of
\begin{align*}
\begin{split}
& \Bigg| 
	\trace((V'V)^{-1})  - \frac{t}{1-t} \Bigg|
+ 2 C \Bigg|
	\trace((V'V)^{-1})
	\E\left[ \frac{p}{\chi^2_p(n d^2)}\right] - \frac{t^2}{(1-t)(t+d^2)}
\Bigg|
\\&
+ C^2 \Bigg| 
	\trace((V'V)^{-1}) 
	\E\left[ \frac{p^2}{(\chi^2_{p+2}(n d^2))^2}\right]  
	- \frac{t^3}{(1-t)(t+d^2)^2}
\Bigg|
\\ &
+ C^2 \Bigg| \frac{1}{\lambda_p(V'V/n)}
	\E\left[ 
	\frac{p^2 d^2 }{(\chi^2_{p+4}(n d^2))^2}
	\right]
	- \frac{ t^2 d^2}{(1-\sqrt{t})^2 (t+d^2)^2}
\Bigg|
\\&
+ 4C 
	\trace((V'V)^{-1})
	\E\left[ \frac{p}{(\chi^2_{p+2}(n d^2))^2}\right]  
+ 4 C
	\frac{1}{\lambda_p(V'V/n)}
	\E\left[ \frac{p d^2 }{(\chi^2_{p+4}(n d^2))^2}
	\right]
\end{split}
\end{align*}
over $d \geq 0$.
\sout{The expression in the preceding display is the sum of six terms,
where the first one obviously converges to zero in probability,
and where each of the remaining five
is of the form $|U_n f_n(d) - u f(d)|$ multiplied
by a constant, where $U_n$ is a random variable
that converges to $u \in \R$ in probability (cf. Lemma~\ref{mp2}), and where
$f_n(d)$  and $f(d)$ are  functions of $d \in [0,\infty)$.} \textcolor{red}{The first term of the preceding display converges to zero in probability and each of the remaining five terms
is of the form $|U_n f_n(d) - u f(d)|$ multiplied
by a constant, where $U_n$ is a random variable
that converges to $u \in \R$ in probability (cf. Lemma~\ref{mp2}), and where
$f_n(d)$  and $f(d)$ are  functions of $d \in [0,\infty)$.}
\sout{[The two possible cases for $U_n$ and $u$, respectively, are
$U_n = \trace((V'V)^{-1})$ and $u=t/(1-t)\geq 0$, as well as
$U_n = 1/\lambda_p(V'V/n)$ and $u = 1/(1-\sqrt{t})^2>0$.]}
For each of the remaining five terms, we now proceed as follows:
To show that $\sup_{d\geq 0} |U_n f_n(d) - u f(d)| \to 0$ in
probability, it suffices to show 
that $|U_n| \sup_{d\geq 0} |f_n(d) - f(d)| \to 0$ in probability
(because $|U_n f_n(d) - u f(d)|$ is bounded by
$|U_n| |f_n(d) - f(d)| + |f(d)| | U_n - u| \leq
|U_n| |f_n(d) - f(d)| + | U_n - u|$, where the inequality follows
upon noting that $f(d) \leq 1$\sout{, which is easily verified for
each of the five remaining terms in the preceding display}).
Let $d_n$, $n\geq 1$, be a sequence of maximizers (or near maximizers)
of  $|f_n - f|$\sout{ , e.g., so that
$\sup_{d\geq 0}|f_n(d)-f(d)| \leq |f_n(d_n)-f(d_n)|+1/n$}.
Because $[0,\infty]$ is compact, we may assume that
$d_n \to \delta \in [0,\infty]$ 
(replacing the original sequence by a convergent
subsequence if necessary).
Moreover, it is easy to see that $\lim_{d\to\infty}f(d) = 0$
(for each of the five remaining terms  in the preceding display).
In particular, we can extend $f(\cdot)$ to
a continuous function on $[0,\infty]$ by setting $f(\infty)=0$.
To complete the proof, we now show that either
(a) $f_n(d_n) - f(\delta) \to 0$ as $n\to\infty$,
or (b) $u =0$ and $|f_n(d_n)|$ is bounded.
In the case where $t+\delta^2 > 0$, we see that either
$p\to\infty$ or $n d_n \to\infty$, and it is not difficult to
conclude  that case (a) occurs by using Lemma~\ref{chisq}
and the assumption that $p\geq 3$.
And in the case where $t+\delta^2 = 0$, we see that
case (b) occurs  for the first, second, and fourth of the
five remaining terms in the preceding display (by arguing as in
the corresponding part in the proof of Theorem~\ref{t1}, mutatis
mutandis), while
again case (a) occurs for the third and the fifth
as the expected values are bounded and $d_n$ tends to zero.
\end{proof}

\begin{proof}[\bf Proof of Theorem~\ref{t3}]
It suffices to show the statements for $\hat{\beta}(c_n)$.
Set $R_\ast = \sup_{\delta^2\geq 0} R(\delta^2,c,t)$
for $R(\cdot,\cdot, \cdot)$ as in Theorem~\ref{t2}, and note that
$\sup_{\beta\in \R^p} \rho_2(\hat{\beta}(c_n),\beta,X) -
\rho_2(\hat{\beta}_{ML},\beta,X)$ converges to $R_\ast - t/(1-t)$
in probability by\sout{ theorems} \textcolor{red}{Theorems}~\ref{t1} and~\ref{t2}.
For the first statement, assume either  that
$0< c \leq 2$ and\sout{ $t>((c-2)/(c+2))^2$} \textcolor{red}{$t>[(c-2)/(c+2)]^2$} or that
$c>2$ and $t>0$ hold (the case $c=0$ can not occur there because $t<1$).  
The statement now follows by observing, for such $c$ and $t$, that
$R_\ast - t/(1-t) > 0$, i.e., that
$R(\delta^2,c,t) - t/(1-t) > 0$ for some $\delta^2\geq 0$
(which is elementary but tedious to verify), and by
setting $\epsilon$ equal to, say,
$(R_\ast - t/(1-t))/2$.
For the second statement, assume that $0\leq c\leq 2$ and\sout{ $t \leq ((c-2)/(c+2))^2$}
\textcolor{red}{$t \leq [(c-2)/(c+2)]^2$}, or that $c>2$ and $t=0$, and note,
for such $t$ and $c$, that $R_\ast - t/(1-t) \leq 0$, i.e.,
that $R(\delta^2,c,t) - t/(1-t) \leq 0$ for each $\delta^2\geq 0$.

For the last statement, let $c\in [0,2]$, and take 
$\epsilon>0$. 
We need to show that
$\P( \rho_2(\hat{\beta}(c_n),\beta,X) - 
\rho_2(\hat{\beta}_{ML},\beta,X) > \epsilon)$ converges to zero
for arbitrary sequences of parameters $\beta \in \R^p$ and
$\Sigma$. Because the set $[0,\infty]$ is compact, we may assume that
$\beta'\Sigma\beta$ converges to a limit $\delta^2\in [0,\infty]$
(by considering convergent subsequences if necessary).
It now follows from Theorem~\ref{t1} that
$\rho_2(\hat{\beta}(c_n), \beta,X)$ converges in probability to
the limit $r(\delta^2,c,t)$ as is given in the theorem,
while $\rho_2(\hat{\beta}_{ML}, \beta,X)$ converges in probability to
$t/(1-t)$. The claim now follows from the fact that
$r(\delta^2,c,t) \leq  t/(1-t)$, irrespective of $\delta^2 \in [0,\infty]$
(which, again, is easy if somewhat tedious to verify).
\end{proof}

\end{appendix}

\bibliographystyle{abbrvnat}

\begin{thebibliography}{19}
\providecommand{\natexlab}[1]{#1}
\providecommand{\url}[1]{\texttt{#1}}
\expandafter\ifx\csname urlstyle\endcsname\relax
  \providecommand{\doi}[1]{doi: #1}\else
  \providecommand{\doi}{doi: \begingroup \urlstyle{rm}\Url}\fi

\bibitem[Bai and Silverstein(2010)]{baisilverstein10}
Z.~Bai and J.~W. Silverstein.
\newblock \emph{Spectral Analysis of Large Dimensional Random Matrices}.
\newblock Springer, New York, 2nd edition, 2010.

\bibitem[Baranchik(1970)]{baranchik70}
A.~J. Baranchik.
\newblock A family of minimax estimators of the mean of a multivariate normal
  distribution.
\newblock \emph{The Annals of Mathematical Statistics}, 41\penalty0
  (2):\penalty0 642--645, 1970.

\bibitem[Baranchik(1973)]{baranchik73}
A.~J. Baranchik.
\newblock Inadmissibility of maximum likelihood estimators in some multiple
  regression problems with three or more independent variables.
\newblock \emph{The Annals of Statistics}, 1\penalty0 (2):\penalty0 312--321,
  1973.

\bibitem[Berger and Bock(1976)]{bergerbock76}
J.~Berger and M.~E. Bock.
\newblock Combining independent normal mean estimation problems with unknown
  variances.
\newblock \emph{The Annals of Statistics}, 4\penalty0 (3):\penalty0 642--648,
  1976.

\bibitem[Berger et~al.(1977)Berger, Bock, Brown, Casella, and
  Gleser]{bergeral77}
J.~Berger, M.~E. Bock, L.~D. Brown, G.~Casella, and L.~Gleser.
\newblock Minimax estimation of a normal mean vector for arbitrary quadratic
  loss and unknown covariance matrix.
\newblock \emph{The Annals of Statistics}, 5\penalty0 (4):\penalty0 763--771,
  1977.

\bibitem[Bock(1975)]{Bock75}
M.~E. Bock.
\newblock Minimax estimators of the mean of a multivariate normal distribution.
\newblock \emph{The Annals of Statistics}, 3\penalty0 (1):\penalty0 209--218,
  1975.

\bibitem[Brown(1990)]{brown90}
L.~D. Brown.
\newblock An ancillarity paradox which appears in multiple linear regression.
\newblock \emph{The Annals of Statistics}, 18\penalty0 (2):\penalty0 471--493,
  1990.

\bibitem[Candes and Tao(2007)]{candestao07}
E.~Candes and T.~Tao.
\newblock The {D}antzig selector: Statistical estimation when p is much larger
  than n.
\newblock \emph{The Annals of Statistics}, 35\penalty0 (6):\penalty0
  2313--2351, 2007.

\bibitem[Copas(1983)]{copas83}
J.~B. Copas.
\newblock Regression, prediction and shrinkage.
\newblock \emph{Journal of the Royal Statistical Society. Series B
  (Methodological)}, 45\penalty0 (3):\penalty0 311--354, 1983.

\bibitem[Dicker(2012)]{dicker12}
L.~Dicker.
\newblock Dense signals, linear estimators, and out-of-sample prediction for
  high-dimensional linear models.
\newblock {\it arXiv:1102.2952 [math.ST]}, 2012.

\bibitem[Frank and Friedman(1993)]{frankfriedman93}
I.~E. Frank and J.~H. Friedman.
\newblock A statistical view of some chemometrics regression tools.
\newblock \emph{Technometrics}, 35\penalty0 (2):\penalty0 109--135, 1993.

\bibitem[H\"orl and Kennard(1970)]{hoerlkennard70}
A.~E. H\"orl and R.~W. Kennard.
\newblock Ridge regression: Biased estimation for nonorthogonal problems.
\newblock \emph{Technometrics}, 12:\penalty0 55--67, 1970.

\bibitem[James and Stein(1961)]{jamesstein61}
W.~James and C.~Stein.
\newblock Estimation with quadratic loss.
\newblock \emph{Proceedings of the Fourth Berkely Symposium on Mathematical
  Statistics and Probability}, 1:\penalty0 361--379, 1961.

\bibitem[Judge and Bock(1978)]{judgebock78}
G.~G. Judge and M.~E. Bock.
\newblock \emph{The statistical implications of pre-test and Stein-rule
  estimators in econometrics}.
\newblock North-Holland Publishing Company, Amsterdam, 1978.

\bibitem[Leeb and P\"otscher(2008)]{leebpoetscher08}
H.~Leeb and B.~M. P\"otscher.
\newblock Model selection.
\newblock In \emph{Handbook of Financial Time Series}, pages 785--821.
  Springer, New York, 2008.

\bibitem[Lehmann and Casella(1998)]{lehmanncasella98}
E.~L. Lehmann and G.~Casella.
\newblock \emph{Theory of Point Estimation}.
\newblock Springer, New York, 2nd edition, 1998.

\bibitem[Stein(1956)]{stein56}
C.~Stein.
\newblock Inadmissibility of the usual estimator for the mean of a multivariate
  normal distribution.
\newblock \emph{Proceedings of the Third Berkeley Symposium on Mathematical
  Statistics and Probability}, 1:\penalty0 197--206, 1956.

\bibitem[Strawderman(2003)]{strawderman03}
W.~E. Strawderman.
\newblock On minimax estimation of a normal mean vector for general quadratic
  loss.
\newblock \emph{Lecture Notes-Monograph Series}, 42:\penalty0 3--14, 2003.

\bibitem[Tibshirani(1996)]{tibshirani96}
R.~Tibshirani.
\newblock Regression shrinkage and selection via the lasso.
\newblock \emph{Journal of the Royal Statistical Society. Series B
  (Methodological)}, 58\penalty0 (1):\penalty0 267--288, 1996.

\end{thebibliography}

\end{document}